\newtheorem{theorem}{Theorem}[section]
\newtheorem{lemma}[theorem]{Lemma}
\newtheorem{corollary}[theorem]{Corollary}
\theoremstyle{definition}
\newtheorem{example}[theorem]{Example}
\theoremstyle{remark}
\numberwithin{equation}{section}
\begin{document}

\title [Some extensions of the operator entropy type inequalities]{Some extensions of the operator entropy type inequalities}

\author[M. Bakherad, A. Morassaei]{Mojtaba Bakherad$^1$ and Ali Morassaei$^2$}

\address{$^1$Department of Mathematics, Faculty of Mathematics, University of Sistan and Baluchestan, Zahedan, Iran.}
\email{mojtaba.bakherad@yahoo.com; bakherad@member.ams.org}
\address{$^2$Department of Mathematics, Faculty of Sciences, University of Zanjan, University Blvd., Zanjan 45371-38791, Iran.}
\email{morassaei@znu.ac.ir}
\subjclass[2010]{Primary 47A63, Secondary 46L05, 47A60.}

\keywords{operator entropy; operator monotone function; Bochner integration; entropy inequality; positive linear map.}
\begin{abstract}
In this paper, we define the generalized relative operator entropy and investigate some of its properties such as subadditivity and homogeneity. As application of our result, we obtain the information inequality.

In continuation, we  establish some reverses of the operator entropy inequalities under certain conditions by using the Mond--Pe\v{c}ari\'c method.
\end{abstract}

\maketitle



\section{Introduction and preliminaries}
Let ${\mathbb B}({\mathscr H})$ denote the $C^*$-algebra of all bounded linear operators on a complex Hilbert space ${\mathscr H}$ with the identity $I_{\mathscr H}$. In the case when ${\rm dim}{\mathscr{H}}=n$, we identify $\mathbb{B}(\mathscr{H})$ with the full matrix algebra $\mathcal{M}_n(\mathbb{C})$ of all $n\times n$ matrices with entries in the complex field. An operator $A\in{\mathbb B}({\mathscr H})$ is called \textit{positive} if $\langle Ax,x\rangle\geq0$ for all $x\in{\mathscr H }$  and in this case we write $A\geq0$. We write $A>0$ if $A$ is a positive invertible operator.  For self-adjoint operators $A, B\in{\mathbb B}({\mathscr H})$, we say $A\leq B$ if $B-A\geq0$. The Gelfand map $f(t)\mapsto f(A)$ is an isometrical $*$-isomorphism between the $C^*$-algebra
$\mathcal{C}({\textsf{sp}}(A))$ of continuous functions on the spectrum ${\textsf{sp}}(A)$ of a self-adjoint operator $A$ and the $C^*$-algebra generated by $A$ and  $I_{\mathscr H}$. If $f, g\in \mathcal{C}({\textsf{sp}}(A))$, then $f(t)\geq g(t)\,\,(t\in{\textsf{sp}}(A))$ implies that $f(A)\geq g(A)$.\\
Let $f$ be a continuous real valued function defined on an interval $J$. It is called \textit{operator monotone} if $A\leq B$ implies $f(A)\leq f(B)$ for all self-adjoint operators $A, B\in {\mathbb B}({\mathscr H})$ with spectra in $J$; see \cite{banach}  and references therein for some recent results. It said to be \textit{operator concave} if $\lambda f(A)+(1-\lambda)f(B)\leq f(\lambda A+(1-\lambda)B)$ for all self-adjoint operators $A, B\in {\mathbb B}({\mathscr H})$ with spectra in $J$ and all $\lambda\in [0,1]$. Every nonnegative continuous function $f$ is operator monotone on $[0,+\infty)$ if and only if $f$ is operator concave on $[0,+\infty)$; see \cite[Theorem 8.1]{FMPS}.

A linear map $\Phi:\mathbb{B}(\mathscr{H})\to\mathbb{B}(\mathscr{K})$, where $\mathscr{H}$ and $\mathscr{K}$ are complex Hilbert spaces, is called \textit{positive} if $\Phi(A)\geq 0$ whenever $A\geq 0$ and is said to be \textit{normalized} if $\Phi(I_\mathscr{H})=I_\mathscr{K}$. We denote by $\mathbf{P}_N[\mathbb{B}(\mathscr{H}),\mathbb{B}(\mathscr{K})]$ the set of all normalized positive linear maps $\Phi:\mathbb{B}(\mathscr{H})\to\mathbb{B}(\mathscr{K})$. If $\Phi \in \mathbf{P}_N[\mathbb{B}(\mathscr{H}),\mathbb{B}(\mathscr{K})]$ and $f$ is an operator concave function on an interval $J$, then
\begin{equation}\label{jen}
f(\Phi(A))\ge\Phi(f(A))\,\quad\quad(\mbox{Davis-Choi-Jensen's inequality})
\end{equation}
for every selfadjoint operator $A$ on $\mathscr{H}$, whose spectrum is contained in $J$, see also \cite{FMPS, MPP, IMP}.

Let ${\mathscr A}$ be a $C^*$-algebra of operators acting on  a Hilbert space, let $T$ be a locally compact Hausdorff space and  $\mu(t)$ be a Radon measure on $T$. A field $(A_t)_{t\in T}$ of operators in ${\mathscr A}$ is called a \textit{continuous field of operators} if the function $t\mapsto A_t$ is norm continuous on $T$ and the function $t \mapsto\|A_t\|$ is integrable, one can form the Bochner integral $\int_{T}A_t{\rm d}\mu(t)$, which is the unique element in ${\mathscr A}$ such that
\begin{equation}\label{e1}
\varphi\left(\int_TA_t{\rm d}\mu(t)\right)=\int_T\varphi(A_t){\rm d}\mu(t)
\end{equation}
for every linear functional $\varphi$ in the norm dual ${\mathscr A}^*$ of ${\mathscr A}$; see \cite{han}.

In 1850 Clausius \cite{Cl} introduced the notion of entropy in the thermodynamics. Since then several extensions and reformulations have been developed in various disciplines; cf. \cite{ME, LR, L, NU}. There have been investigated the so-called entropy inequalities by some mathematicians; see \cite{BLP, BS, FUR2} and references therein. A relative operator entropy of strictly positive operators $A,B$ was introduced in the
noncommutative information theory by Fujii and Kamei \cite{FK} by
$$
S(A|B) = A^{\frac{1}{2}} \log(A^{-\frac{1}{2}}BA^{-\frac{1}{2}})A^{\frac{1}{2}}.
$$
In the same paper, it is shown that $S(A|B) \le 0$ if $A \ge B$.

Next, recall that $X\natural_q Y$ is defined by $X^{\frac{1}{2}}\left(X^{-\frac{1}{2}}YX^{-\frac{1}{2}}\right)^qX^{\frac{1}{2}}$ for any real number $q$ and any strictly positive operators $X$ and $Y$. For $p\in[0,1]$, the operator $X \natural_p Y$ coincides with the well-known geometric mean of $X, Y$.

Furuta \cite{F} defined the operator Shannon entropy by
$$
S_p(A|B)=A^{\frac{1}{2}}\left(A^{-\frac{1}{2}}BA^{-\frac{1}{2}}\right)^p\log\left(A^{-\frac{1}{2}}BA^{-\frac{1}{2}}\right)A^{\frac{1}{2}}\,,
$$
where $p\in[0, 1]$ and $A, B$ are strictly positive operators on a Hilbert space $\mathscr {H}$. Suppose that   $\mathbf{A}=(A_t)_{t\in T},\mathbf{B}=(B_t)_{t\in T}$ are (continuous) fields of strictly positive operators, $q\in\mathbb R$ and $f$ is a nonnegative operator monotone function on $(0,\infty)$. Then we have the definition of the generalized relative operator entropy
\begin{equation}\label{ShEn321}
\widetilde{S}_q^f(\mathbf{A}|\mathbf{B}):=\int_TS_q^f(A_s|B_s)d\mu(s)\,,
\end{equation}
where $S^f_q(A_s|B_s)=A_s^{1/2}\left(A_s^{-1/2}B_sA_s^{-1/2}\right)^qf\left(A_s^{-1/2}B_sA_s^{-1/2}\right)A_s^{1/2}$. In the discrete case $T=\{1,2,\cdots,n\}$, we get
\begin{equation}\label{ShEn}
S_q^f(\mathbf{A}|\mathbf{B}):=\sum_{j=1}^nS_q^f(A_j|B_j).
\end{equation}
For $q=0$, $f(t)=\log t$ and $A, B>0$, we get the relative operator entropy $S_0^f(A|B)=A^{\frac{1}{2}}\log\left(A^{-\frac{1}{2}}BA^{-\frac{1}{2}}\right)A^{\frac{1}{2}}=S(A|B)$.

Moslehian et. al \cite{MMM} showed the following operator entropy
\begin{align}\label{ShEnIn1}
f&\left[\sum_{j=1}^n(A_j\natural_{p+1}B_j)+t_0\left(I_{\mathscr H}-\sum_{j=1}^nA_j\natural_pB_j\right)\right]-f(t_0)\left(I_{\mathscr H}-\sum_{j=1}^nA_j\natural_pB_j\right)\nonumber\\
&\ge S_p^f(\mathbf{A}|\mathbf{B})\qquad(p\in[0,1]),
\end{align}
where $\mathbf{A}=(A_1,\cdots,A_n)$ and $\mathbf{B}=(B_1,\cdots,B_n)$ are finite sequences of strictly positive operators such that $\sum_{j=1}^nA_j=\sum_{j=1}^nB_j=I_{\mathscr H}$,  $f$ is a nonnegative operator monotone function on $(0,\infty)$ and $t_0$ is a positive fixed real number.\\

We present some extensions of the operator entropy inequality. Also, we show some reverses of the operator entropy inequalities under certain conditions by using the Mond--Pe\v{c}ari\'c method. In this direction, we show a reverse of \eqref{ShEnIn1}.

\section{Some extensions of the operator entropy inequality}
First, we present variational form of $S_q^f(A|B)$ where $A$ and $B$ are two strictly positive operator in $\mathbb{B}(\mathscr{H})$ and $q$ is an arbitrary real number.
\begin{lemma}\label{l1}
If $A$ and $B$ are strictly positive, then
\begin{equation}\label{e2}
S_q^f(A|B)=B S_{q-1}^f\left(B^{-1}|A^{-1}\right) B\,.
\end{equation}
In particular, $S_0^f(A|B)=A S_0^f\left(B^{-1}|A^{-1}\right) B$.
\end{lemma}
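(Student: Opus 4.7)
The plan is to view both sides of \eqref{e2} as functions of the single positive invertible operator $T := A^{-1/2}BA^{-1/2}$, and to reduce the identity to a tautology of the continuous functional calculus by constructing a unitary intertwiner between $T$ and its companion on the $B$-side.

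First, I would unpack the right-hand side. Substituting $A\mapsto B^{-1}$ and $B\mapsto A^{-1}$ in the definition of $S_{q-1}^f$, the ``middle'' operator becomes $(B^{-1})^{-1/2}(A^{-1})(B^{-1})^{-1/2}=B^{1/2}A^{-1}B^{1/2}$, so
\[
B\,S_{q-1}^f(B^{-1}|A^{-1})\,B \;=\; B^{1/2}\bigl(B^{1/2}A^{-1}B^{1/2}\bigr)^{q-1} f\bigl(B^{1/2}A^{-1}B^{1/2}\bigr)\,B^{1/2}.
\]
It therefore suffices to show that this equals $A^{1/2}T^{q}f(T)A^{1/2}$.

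Next, I would introduce the operator $U := B^{1/2}A^{-1/2}T^{-1/2}$. A short direct check (using $T^{-1}=A^{1/2}B^{-1}A^{1/2}$) will give $U^{*}U = UU^{*} = I_{\mathscr H}$ together with the crucial intertwining relation $UTU^{*}=B^{1/2}A^{-1}B^{1/2}$. By the continuous functional calculus this yields $g\bigl(B^{1/2}A^{-1}B^{1/2}\bigr)=U\,g(T)\,U^{*}$ for every continuous $g$; applying this to $g(t)=t^{q-1}f(t)$ rewrites the right-hand side as $B^{1/2}U\,T^{q-1}f(T)\,U^{*}B^{1/2}$.

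Finally I would absorb the two copies of $U$ into the outer $B^{1/2}$ factors. Using $B=A^{1/2}TA^{1/2}$, one verifies directly that $B^{1/2}U=A^{1/2}T^{1/2}$ and $U^{*}B^{1/2}=T^{1/2}A^{1/2}$, which collapses the expression to $A^{1/2}T^{1/2}\cdot T^{q-1}f(T)\cdot T^{1/2}A^{1/2}=A^{1/2}T^{q}f(T)A^{1/2}=S_{q}^{f}(A|B)$, as required. The ``in particular'' clause is then obtained by specializing the main identity to $q=0$ and using the trivial simplification $B^{1/2}\bigl(B^{1/2}A^{-1}B^{1/2}\bigr)^{-1}=AB^{-1/2}$ to rearrange the result into the form $A\,S_{0}^{f}(B^{-1}|A^{-1})\,B$. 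I do not expect any real obstacle here; the only care needed is in the bookkeeping of square roots when verifying the unitarity of $U$, where three factors must cancel in a definite order.
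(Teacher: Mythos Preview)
Your proof is correct and essentially mirrors the paper's approach. The paper uses the identity $Xg(X^{*}X)=g(XX^{*})X$ with $X=B^{1/2}A^{-1/2}$ and a chain of algebraic rewrites; your unitary $U=B^{1/2}A^{-1/2}T^{-1/2}=X(X^{*}X)^{-1/2}$ is exactly the polar part of this same $X$, so the intertwining $Ug(T)U^{*}=g(XX^{*})$ is just a repackaging of that lemma.
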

\begin{proof} Since $Xg(X^*X)=g(XX^*)X$ for every $X\in\mathbb{B}(\mathscr{H})$ and every continuous function $g$ on $[0, \|X\|^2]$ \cite[Lemma 1.7]{FMPS}, considering $X=B^{1/2}A^{-1/2}$ we have
\begin{align*}
S_q^f(A|B)&=A^{\frac{1}{2}}\left(A^{-\frac{1}{2}}BA^{-\frac{1}{2}}\right)^q f\left(A^{-\frac{1}{2}}BA^{-\frac{1}{2}}\right)A^{\frac{1}{2}}\\
&=A^{\frac{1}{2}}\left(X^*X\right)^q f\left(X^*X\right)A^{\frac{1}{2}}\\
&=B^{\frac{1}{2}}B^{-\frac{1}{2}}A^{\frac{1}{2}} \left(X^*X\right)^q f\left(X^*X\right)A^{\frac{1}{2}}B^{-\frac{1}{2}}B^{\frac{1}{2}}\\
&=B^{\frac{1}{2}}{X^*}^{-1}\left(X^*X\right)^q f\left(X^*X\right)X^{-1}B^{\frac{1}{2}}\\
&=B^{\frac{1}{2}}X(X^*X)^{-1}\left(X^*X\right)^q f\left(X^*X\right)(X^*X)^{-1}X^*B^{\frac{1}{2}}\\
&=B^{\frac{1}{2}}X(X^*X)^{q-2} f\left(X^*X\right)X^*B^{\frac{1}{2}}\\
&=B^{\frac{1}{2}}(XX^*)^{q-2} X f\left(X^*X\right)X^*B^{\frac{1}{2}}\\
&=B^{\frac{1}{2}}(XX^*)^{q-2} f\left(XX^*\right) XX^*B^{\frac{1}{2}}\\
&=B^{\frac{1}{2}}(XX^*)^{q-1} f\left(XX^*\right)B^{\frac{1}{2}}\\
&=B^{\frac{1}{2}}\left(B^{\frac{1}{2}}A^{-1}B^{\frac{1}{2}}\right)^{q-1} f\left(B^{\frac{1}{2}}A^{-1}B^{\frac{1}{2}}\right)B^{\frac{1}{2}}\\
&=B S_{q-1}^f(B^{-1}|A^{-1}) B\,,
\end{align*}
as desired.

Also,
\begin{align*}
S_0^f(A|B)&=B S_{-1}^f(B^{-1}|A^{-1}) B\\
&=B\left[B^{-\frac{1}{2}}\left(B^{\frac{1}{2}}A^{-1}B^{\frac{1}{2}}\right)^{-1}f\left(B^{\frac{1}{2}}A^{-1}B^{\frac{1}{2}}\right)B^{-\frac{1}{2}}\right]B\\
&=A \left[B^{-\frac{1}{2}} f\left(B^{\frac{1}{2}}A^{-1}B^{\frac{1}{2}}\right)B^{-\frac{1}{2}}\right]B\\
&=A S_0^f\left(B^{-1}|A^{-1}\right) B\,.
\end{align*}
\end{proof}
The above lemma says that if $B$ is also invertible, then we can define $S_q^f(A|B)$ by \eqref{e2}. Furthermore, if $A$ and $B$ commute, then
$$
S_q^f(A|B)=A \left(A^{-1}B\right)^q f\left(A^{-1}B\right)\,.
$$
Note that, in general case, the generalized relative operator entropy for noninvertible positive operators does not always exist. For instance, give $f(t)=\log t$ and $q=0$, consequently $S_0^f(I_{\mathscr H}|\epsilon I_{\mathscr H})=(\log\epsilon)I_{\mathscr H}$ is not bounded below and hence $S_0^f(I_{\mathscr H}|0)$ does not make sense. For more information, see \cite{FMPS}.\\
Now, we have the following lemmas.
\begin{lemma}
Let $\mathbf{A}=(A_t)_{t\in T}$ and $\mathbf{B}=(B_t)_{t\in T}$ be continuous fields of strictly positive operators. Then
\begin{align}\label{kian12}
\int_T(A_s \natural_p B_s)d\mu(s) \le
\left(\int_TA_sd\mu(s)\right) \natural_p
\left(\int_TB_sd\mu(s)\right),
\end{align}
where $p\in[0,1]$.
\end{lemma}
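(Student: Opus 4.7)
The claim is the integral version of the classical superadditivity of the weighted geometric mean $\natural_p$ for $p\in[0,1]$. The plan is to reduce it, by a Riemann-sum approximation of the Bochner integrals, to the finite-sum superadditivity
$$\sum_{j=1}^n (A_j \natural_p B_j) \le \Bigl(\sum_{j=1}^n A_j\Bigr) \natural_p \Bigl(\sum_{j=1}^n B_j\Bigr),$$
which in turn rests on two standard facts: the joint operator concavity of $(A,B)\mapsto A\natural_p B$ on pairs of strictly positive operators for $p\in[0,1]$, and the positive homogeneity $(\alpha A)\natural_p(\alpha B)=\alpha(A\natural_p B)$ for every $\alpha>0$.

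First I would establish the finite-sum version. Applying joint concavity to $A_1,A_2,B_1,B_2>0$ at $\lambda=1/2$ gives
$$\tfrac12(A_1\natural_p B_1)+\tfrac12(A_2\natural_p B_2)\le\bigl(\tfrac12(A_1+A_2)\bigr)\natural_p\bigl(\tfrac12(B_1+B_2)\bigr),$$
and positive homogeneity absorbs the factors of $1/2$ on the right, yielding the two-term case; a routine induction then gives the $n$-term version. Next I would approximate $\int_T A_s\,d\mu(s)$ and $\int_T B_s\,d\mu(s)$ in norm by Riemann-type sums $\sum_j \mu(T_j)A_{s_j}$ and $\sum_j \mu(T_j)B_{s_j}$ associated with a partition $\{T_j\}$ of $T$; this is legitimate because the fields are norm-continuous and their norm functions are integrable. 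Since $\mu(T_j)(A_{s_j}\natural_p B_{s_j})=(\mu(T_j)A_{s_j})\natural_p(\mu(T_j)B_{s_j})$ by homogeneity, applying the finite superadditivity to the scaled summands produces the desired inequality at the level of Riemann sums, and refining the partition completes the proof, since $(A,B)\mapsto A\natural_p B$ is norm-continuous on pairs of strictly positive operators.

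The main technical obstacle is the limit passage, in particular ensuring that the integrals $\int_T A_s\,d\mu(s)$, $\int_T B_s\,d\mu(s)$ and the Riemann sums used along the way remain strictly positive so that $\natural_p$ is defined on both sides; this follows from the strict positivity of the fields together with the defining property \eqref{e1} of the Bochner integral. A cleaner conceptual alternative would be to view $X\mapsto\int_T X_s\,d\mu(s)$ as a positive linear map on an appropriate $C^*$-algebra of integrable continuous fields and invoke Ando's transformer inequality $\Phi(A\,\sigma\,B)\le\Phi(A)\,\sigma\,\Phi(B)$ for Kubo--Ando means with $\sigma=\natural_p$, bypassing the Riemann-sum argument entirely.
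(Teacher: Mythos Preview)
Your principal route --- finite-sum superadditivity via joint concavity and homogeneity, followed by Riemann-sum approximation of the Bochner integrals --- is sound in spirit but genuinely different from the paper's argument. The paper does not discretize at all: it introduces the normalized positive linear map
\[
\Phi(X)=\int_T C_t^*\,X\,C_t\,d\mu(t),\qquad C_t=B_t^{1/2}\Bigl(\int_T B_s\,d\mu(s)\Bigr)^{-1/2},
\]
observes that $\Phi(I)=I$, rewrites both sides of the desired inequality in terms of $\Phi$ applied to $B_t^{-1/2}A_tB_t^{-1/2}$, and then invokes the Davis--Choi--Jensen inequality \eqref{jen} for the operator-concave function $t\mapsto t^p$ ($p\in[0,1]$) to get $\bigl(\Phi(Y)\bigr)^p\ge\Phi(Y^p)$. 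This is exactly the ``transformer inequality'' alternative you mention in your closing paragraph, and it sidesteps all limiting issues. What your Riemann-sum plan buys is a self-contained reduction to the elementary two-term case of joint concavity; what the paper's approach buys is a one-step proof directly at the integral level with no approximation. One caution about your main plan: on a general locally compact Hausdorff $T$ with an arbitrary Radon measure, approximating Bochner integrals by Riemann-type sums with point evaluations needs more justification than you give (simple-function approximation is the native tool for Bochner integrals, and $T$ is not assumed metrizable or $\sigma$-compact), so the limit passage, while ultimately feasible, is not as routine as ``refining the partition'' suggests.
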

\begin{proof}
For continuous fields of strictly positive operators $\mathbf{A}=(A_t)_{t\in T}$ and $\mathbf{B}=(B_t)_{t\in T}$, we take the  positive unital linear map $\Phi(X)=\int_TC^*XCd\mu(t)\,\,(X\in{\mathscr A})$, where $C=B_t^{\frac{1}{2}}\left(\int_TB_sd\mu(s)\right)^{-\frac{1}{2}}$. Thus for $p\in[0,1]$ we have
{\footnotesize\begin{align*}
&\left(\int_TA_td\mu(t)\right) \natural_p\left(\int_TB_sd\mu(s)\right)\\
&=\left(\int_TB_sd\mu(s)\right)^{\frac{1}{2}}
\left(\left(\int_TB_sd\mu(s)\right)^{-\frac{1}{2}}\int_TA_td\mu(t)\left(\int_TB_sd\mu(s)\right)^{-\frac{1}{2}}\right)^p\left(\int_TB_sd\mu(s)\right)^{\frac{1}{2}}\\
&=\left(\int_TB_sd\mu(s)\right)^{\frac{1}{2}}
\left(\int_T\left(\int_TB_sd\mu(s)\right)^{-\frac{1}{2}}B_t^{\frac{1}{2}}(B_t^{-\frac{1}{2}}A_tB_t^{-\frac{1}{2}})B_t^{\frac{1}{2}}
\left(\int_TB_sd\mu(s)\right)^{-\frac{1}{2}}d\mu(t)
\right)^p\\
&\qquad\qquad\times\left(\int_TB_sd\mu(t)\right)^{\frac{1}{2}}\\&
=\left(\int_TB_sd\mu(s)\right)^{\frac{1}{2}}
\left(\int_TC^*B_t^{-\frac{1}{2}}A_tB_t^{-\frac{1}{2}}Cd\mu(t)
\right)^p\left(\int_TB_sd\mu(t)\right)^{\frac{1}{2}}\\&=
\left(\int_TB_sd\mu(s)\right)^{\frac{1}{2}}\left(\Phi\left(B_t^{-\frac{1}{2}}A_tB_t^{-\frac{1}{2}}\right)\right)^p\left(\int_TB_sd\mu(s)\right)^{\frac{1}{2}}
\\&\geq\left(\int_TB_sd\mu(s)\right)^{\frac{1}{2}}\Phi\left(\left(B_t^{-\frac{1}{2}}A_tB_t^{-\frac{1}{2}}\right)^p\right)\left(\int_TB_sd\mu(s)\right)^{\frac{1}{2}}
\qquad\qquad(\textrm{by ~\eqref{jen}})\\&=\left(\int_TB_sd\mu(s)\right)^{\frac{1}{2}}
\left(\int_TC^*\left(B_t^{-\frac{1}{2}}A_tB_t^{-\frac{1}{2}}\right)^p Cd\mu(t)
\right)\left(\int_TB_sd\mu(t)\right)^{\frac{1}{2}}\\&=\int_TB_t^{\frac{1}{2}}\left(B_t^{-\frac{1}{2}}A_tB_t^{-\frac{1}{2}}\right)^p B_t^{\frac{1}{2}}d\mu(t)
\\&=\int_T(A_t \natural_p B_t)d\mu(t)\,.
\end{align*}}
\end{proof}
\begin{lemma}\label{2.3}
If $X, C_s\in\mathscr A\,\,(s\in T)$ such that $0<m \le X \le M$, $f:(0, \infty)\to[0, \infty)$ is an operator monotone function and $t_0\in[m, M]$, then
\begin{align*}
f&\left(\int_T C_s^*XC_sd\mu(s)+t_0\Big(I_{\mathscr H}-\int_TC_s^*C_sd\mu(s)\Big)\right)\\
&\ge \int_TC_s^*f(X)C_sd\mu(s)+f(t_0)\Big(I_{\mathscr H}-\int_TC_s^*C_sd\mu(s)\Big)\,,
\end{align*}
where  $\int_TC_s^*C_sd\mu(s)\leq I_{\mathscr H}$.
\end{lemma}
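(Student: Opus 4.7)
The plan is to reduce this to the Davis--Choi--Jensen inequality \eqref{jen} by artificially completing the sub-unital map $X\mapsto \int_T C_s^*XC_s\,d\mu(s)$ to a normalized positive linear map, where the extra mass is absorbed at the scalar point $t_0$. Set
$$
D:=I_{\mathscr H}-\int_T C_s^*C_s\,d\mu(s)\ge 0,
$$
which is nonnegative by the hypothesis on the $C_s$'s, and consider the linear map $\Phi:{\mathscr A}\oplus{\mathscr A}\to{\mathscr A}$ given by
$$
\Phi(Y\oplus Z)=\int_T C_s^*YC_s\,d\mu(s)+D^{1/2}ZD^{1/2}.
$$
Being a sum of two manifestly positive linear maps, $\Phi$ is positive, and a direct computation shows $\Phi(I\oplus I)=\int_T C_s^*C_s\,d\mu(s)+D=I_{\mathscr H}$, so $\Phi\in\mathbf{P}_N[{\mathscr A}\oplus{\mathscr A},{\mathscr A}]$.

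Next I would invoke the cited fact (\cite[Theorem 8.1]{FMPS}) that any nonnegative operator monotone function on $(0,\infty)$ is operator concave there. Applying \eqref{jen} to the self-adjoint element $X\oplus t_0 I_{\mathscr H}$, whose spectrum lies in $[m,M]\subset(0,\infty)$, yields
$$
f\bigl(\Phi(X\oplus t_0I_{\mathscr H})\bigr)\ge \Phi\bigl(f(X\oplus t_0I_{\mathscr H})\bigr)=\Phi\bigl(f(X)\oplus f(t_0)I_{\mathscr H}\bigr).
$$
Unpacking both sides through the definition of $\Phi$ gives exactly the desired inequality, since $\Phi(X\oplus t_0 I_{\mathscr H})=\int_T C_s^*XC_s\,d\mu(s)+t_0 D$ and similarly on the right-hand side with $f(X)$ and $f(t_0)$.

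The only real subtlety is verifying that this ``two-channel'' map $\Phi$ is positive and unital and that the spectrum of $X\oplus t_0 I_{\mathscr H}$ lies inside the interval on which operator concavity is available; both are immediate once the hypotheses $0<m\le X\le M$, $t_0\in[m,M]$, and $\int_T C_s^*C_s\,d\mu(s)\le I_{\mathscr H}$ are used. I do not expect any genuine obstacle beyond bookkeeping: the construction of $\Phi$ is the standard device for converting a Jensen-type inequality with a non-unital map into one with a unital map by inserting a correction term at a fixed point of the function's domain.
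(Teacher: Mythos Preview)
Your proposal is correct and is essentially identical to the paper's own argument: the paper likewise sets $D=\bigl(I_{\mathscr H}-\int_T C_s^*C_s\,d\mu(s)\bigr)^{1/2}$, builds the normalized positive linear map $\Phi\bigl(\mathrm{diag}(X,Y)\bigr)=\int_T C_s^*XC_s\,d\mu(s)+D^*YD$, and applies the Davis--Choi--Jensen inequality \eqref{jen} to $\mathrm{diag}(X,t_0)$. The only cosmetic difference is that you write the domain as ${\mathscr A}\oplus{\mathscr A}$ rather than as $2\times 2$ diagonal matrices, which is immaterial.
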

\begin{proof}
We put $D=(I_{\mathscr H}-\int_TC_s^*C_sd\mu(s))^\frac{1}{2}$. Assume that the positive unital linear map $\Phi\left({\rm diag}(X,Y)\right)=\int_TC_s^*XC_sd\mu(s)+D^*YD\,\,(X,Y\in\mathscr{A})$, where
$${\rm diag}(X,Y)=\left[\begin{array}{cc}
                         X &  0\\0 & Y
 \end{array}\right]\,.
$$
Using inequality \eqref{jen} and the operator monotonicity of $f$ we have
\begin{align*}
f&\left(\int_TC_s^*XC_sd\mu(s)+t_0\left(I_{\mathscr H}-\int_TC_s^*C_sd\mu(s)\right)\right)\\
&=f\left(\Phi\left(\left[\begin{array}{cc}
                         X &  0\\0 & t_0
 \end{array}\right]\right)\right)\\
 &\geq\left(\Phi\left(\left[\begin{array}{cc}
         f(X) &  0\\0 & f(t_0)
 \end{array}\right]\right)\right)\,\qquad(\textrm{by \eqref{jen}})\\
 &=\int_TC_s^*f(X)C_sd\mu(s)+D^*f(t_0)D\,,
\end{align*}
whenever $0<m\leq X \leq M$ and $t_0\in[m,M]$. Therefore we get the desired inequality.
\end{proof}
In the next theorem we have an extension of \eqref{ShEnIn1}.
\begin{theorem}\label{tShEnIn000}
Let $\mathbf{A}=(A_t)_{t\in T},\mathbf{B}=(B_t)_{t\in T}$ be continuous fields of strictly positive operators such that $0<m A_s \leq B_s \leq M A_s\,\,(s\in T)$  for some positive real numbers $m, M$, where $m<1<M$, and $\int_TA_sd\mu(s)=\int_TB_sd\mu(s)=I_{\mathscr H}$, $f: (0,\infty) \to [0,\infty)$ be operator concave and $p\in[0,1]$. Then
{\footnotesize
\begin{align}\label{ShEnIn}
f&\left[\int_T(A_s\natural_{p+1}B_s)d\mu(s)+t_0\left(I_{\mathscr H}-\int_TA_s\natural_pB_sd\mu(s)\right)\right]-f(t_0)\left(I_{\mathscr H}-\int_TA_s\natural_pB_sd\mu(s)\right)\nonumber\\
&\ge \widetilde{S}_p^f(\mathbf{A}|\mathbf{B})\,.
\end{align}}
\end{theorem}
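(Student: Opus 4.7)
The plan is to reduce the assertion to a field-valued analogue of Lemma \ref{2.3} via an appropriate choice of operator field $(C_s)_{s\in T}$, after setting $X_s := A_s^{-1/2}B_sA_s^{-1/2}$. The sandwich hypothesis $mA_s \leq B_s \leq MA_s$ implies $mI_{\mathscr H}\leq X_s\leq MI_{\mathscr H}$ for every $s\in T$, so the spectra of the $X_s$ lie uniformly in $[m,M]$; in particular any choice $t_0\in[m,M]$ is admissible for the Davis--Choi--Jensen step below.

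Set $C_s := X_s^{p/2}A_s^{1/2}$. Using the commutativity of $X_s^{p/2}$ with $f(X_s)$ in the continuous functional calculus of $X_s$, a direct calculation yields
$$C_s^*C_s = A_s\natural_p B_s, \qquad C_s^*X_sC_s = A_s\natural_{p+1}B_s, \qquad C_s^*f(X_s)C_s = S_p^f(A_s|B_s).$$
Integrating the first of these and applying the previous lemma together with the normalizations $\int_T A_s\,d\mu(s)=\int_T B_s\,d\mu(s)=I_{\mathscr H}$ gives
$$\int_T C_s^*C_s\,d\mu(s) = \int_T A_s\natural_p B_s\,d\mu(s) \leq I_{\mathscr H}\natural_p I_{\mathscr H} = I_{\mathscr H}.$$
Consequently $D := \bigl(I_{\mathscr H} - \int_T C_s^*C_s\,d\mu(s)\bigr)^{1/2}$ is a well-defined positive operator.

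Next I emulate the proof of Lemma \ref{2.3} with a field $(X_s)_{s\in T}$ in place of a single $X$. Concretely, I consider the positive unital linear map
$$\Phi\bigl(\operatorname{diag}((Y_s)_{s\in T},Z)\bigr) := \int_T C_s^*Y_sC_s\,d\mu(s) + DZD,$$
evaluated at $\operatorname{diag}((X_s)_{s\in T}, t_0 I_{\mathscr H})$. Applying \eqref{jen} to the operator concave function $f$ and observing that $f$ acts diagonally on the block structure then produces
\begin{align*}
f\!\left(\int_T C_s^*X_sC_s\,d\mu(s) + t_0 D^2\right) &\geq \int_T C_s^*f(X_s)C_s\,d\mu(s) + f(t_0)D^2.
\end{align*}
Substituting the three identities for $C_s^*C_s$, $C_s^*X_sC_s$ and $C_s^*f(X_s)C_s$ and rearranging gives \eqref{ShEnIn}.

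The only delicate point is making the field-valued Davis--Choi--Jensen step rigorous: writing $\Phi$ as a positive unital map on an appropriate $C^*$-algebra and verifying that $f$ respects the block decomposition. Either one realizes $(X_s)_{s\in T}$ as a single self-adjoint operator on the direct integral Hilbert space $\int_T^{\oplus}\mathscr H\,d\mu(s)$ (on which Lemma \ref{2.3} applies verbatim), or one first proves the discrete case and then passes to the continuous case by a standard approximation argument. In the discrete case the construction collapses to Lemma \ref{2.3} applied with $X = \operatorname{diag}(X_1,\dots,X_n)$ and a column of the $C_j$, which recovers \eqref{ShEnIn1} as a special case.
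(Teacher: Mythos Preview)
Your proof is essentially the paper's own argument: the same substitution $C_s=(A_s^{-1/2}B_sA_s^{-1/2})^{p/2}A_s^{1/2}$, the same use of the geometric-mean inequality \eqref{kian12} to obtain $\int_T C_s^*C_s\,d\mu(s)\le I_{\mathscr H}$, and the same appeal to the Davis--Choi--Jensen construction behind Lemma~\ref{2.3}. If anything you are more careful than the paper, which invokes Lemma~\ref{2.3} verbatim even though that lemma is stated for a single operator $X$ rather than a field $(X_s)_{s\in T}$; your remark about realizing the field as a single operator on a direct integral (or passing through the discrete case) is exactly what is needed to close that small gap.
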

\begin{proof}
It follows from \eqref{kian12} and $\int_TA_sd\mu(s)=\int_TB_sd\mu(s)=I_{\mathscr H}$ that $\int_TA_s \natural_{p} B_s d\mu(s)\le I_{\mathscr H}\,\,(p\in[0,1])$.
\begin{align*}
f\Bigg[&\int_T(A_s\natural_{p+1}B_s)d\mu(s)+t_0\left(I_{\mathscr H}-\int_TA_s\natural_pB_sd\mu(s)\right)\Bigg]\\
=&f\Bigg[\int_T\left(\Big(A_s^{-\frac{1}{2}}B_sA_s^{-\frac{1}{2}}\Big)^{\frac{p}{2}}A_s^{\frac{1}{2}}\right)^*
\Big(A_s^{-\frac{1}{2}}B_sA_s^{-\frac{1}{2}}\Big)\left(\Big(A_s^{-\frac{1}{2}}B_sA_s^{-\frac{1}{2}}\Big)^{\frac{p}{2}}A_s^{\frac{1}{2}}\right)d\mu(s)\\
&+t_0\left(I_{\mathscr H}-\int_TA_s\natural_p B_sd\mu(s)\right)\Bigg]\\
\ge &\int_TA_s^{\frac{1}{2}}\left(A_s^{-\frac{1}{2}}B_sA_s^{-\frac{1}{2}}\right)^{\frac{p}{2}}
f\left(A_s^{-\frac{1}{2}}B_sA_s^{-\frac{1}{2}}\right)\left(A_s^{-\frac{1}{2}}B_sA_s^{-\frac{1}{2}}\right)^{\frac{p}{2}}
A_s^{\frac{1}{2}}d\mu(s)\\
&+f(t_0)\left(I_{\mathscr H}-\int_TA_s\natural_pB_sd\mu(s)\right)\quad\quad\quad\quad\mbox{(by Lemma \ref{2.3})}\\
=&\int_TA_s^{\frac{1}{2}}\left(A_s^{-\frac{1}{2}}B_sA_s^{-\frac{1}{2}}\right)^p
f\left(A_s^{-\frac{1}{2}}B_sA_s^{-\frac{1}{2}}\right)A_s^{\frac{1}{2}}d\mu(s)+f(t_0)\left(I_{\mathscr H}-\int_TA_s\natural_pB_sd\mu(s)\right)\\
=&\int_T{S}_p^f(A_s|B_s)d\mu(s)+f(t_0)\left(I_{\mathscr H}-\int_TA_s\natural_pB_sd\mu(s)\right)\,.
\end{align*}
\end{proof}
In the next theorem, we present the lower and upper bound of the generalized relative operator entropy.
\begin{theorem}\label{t2}
With above notations, the following statements hold:
\begin{enumerate}
\item[(i)] $\widetilde{S}_q^f(\mathbf{A}|\mathbf{B}) \ge 0$.
\item[(ii)] If $f(t) \le t-1$, then $\widetilde{S}_q^f(\mathbf{A}|\mathbf{B}) \le \int_T(A_s\natural_{q+1} B_s-A_s\natural_q B_s)d\mu(s)$. In particular, $\widetilde{S}_0^f(\mathbf{A}|\mathbf{B}) \le \int_T(B_s-A_s)d\mu(s)$ and $\widetilde{S}_1^f(\mathbf{A}|\mathbf{B}) \le \int_T(B_sA_s^{-1}B_s-B_s)d\mu(s)$.
\end{enumerate}
\end{theorem}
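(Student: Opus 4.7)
The plan is to reduce both claims to pointwise (in $s$) operator inequalities on the transform $T_s := A_s^{-1/2} B_s A_s^{-1/2}$, which is strictly positive because $A_s, B_s$ are. Everything in sight is a continuous function of the single operator $T_s$, so the proofs rest on the functional calculus rather than on any operator convexity argument.

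For part (i), I would observe that
\[
S_q^f(A_s|B_s) = A_s^{1/2}\,T_s^{q}\,f(T_s)\,A_s^{1/2} = A_s^{1/2}\, g(T_s)\, A_s^{1/2},
\]
where $g(t) := t^{q} f(t)$. Since $f$ maps $(0,\infty)$ into $[0,\infty)$ and $t^{q} > 0$ on $(0,\infty)$, the function $g$ is nonnegative on the spectrum of $T_s$, so $g(T_s) \ge 0$ by the Gelfand map. Conjugation by $A_s^{1/2}$ preserves positivity, hence $S_q^f(A_s|B_s) \ge 0$ for every $s \in T$, and integration against the positive measure $\mu$ yields $\widetilde{S}_q^f(\mathbf{A}|\mathbf{B}) \ge 0$.

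For part (ii), the hypothesis $f(t) \le t - 1$ on $(0,\infty)$ gives, by the order-preserving property of the functional calculus applied to the strictly positive $T_s$, the inequality $f(T_s) \le T_s - I$. The key point (which makes this routine rather than requiring a Jensen-type argument) is that $T_s^{q}$, $f(T_s)$ and $T_s - I$ all commute, being functions of the single operator $T_s$; multiplying both sides by the positive operator $T_s^{q}$ therefore preserves the order, yielding $T_s^{q} f(T_s) \le T_s^{q+1} - T_s^{q}$. Conjugating by $A_s^{1/2}$ and recognising $A_s^{1/2} T_s^{r} A_s^{1/2} = A_s\natural_r B_s$ for $r = q, q+1$ gives
\[
S_q^f(A_s|B_s) \le A_s\natural_{q+1} B_s - A_s\natural_q B_s,
\]
and integrating over $T$ produces the stated bound. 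The two particular cases follow by direct substitution: $q=0$ gives $A_s\natural_1 B_s - A_s\natural_0 B_s = B_s - A_s$, while $q=1$ gives $A_s\natural_2 B_s - A_s\natural_1 B_s = B_s A_s^{-1} B_s - B_s$ after expanding $(A_s^{-1/2} B_s A_s^{-1/2})^2$ and simplifying.

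I do not anticipate any real obstacle: the only step that could look delicate is the multiplication $f(T_s) \le T_s - I \Rightarrow T_s^{q} f(T_s) \le T_s^{q+1} - T_s^{q}$, which for a general operator inequality would fail, but here is legitimate precisely because commutativity reduces it to the scalar inequality $t^{q} f(t) \le t^{q+1} - t^{q}$ on $\mathrm{sp}(T_s) \subset (0,\infty)$ combined with the Gelfand map. The nonnegativity of $g(t) = t^{q} f(t)$ in part (i) similarly uses only that $f \ge 0$ on the spectrum of $T_s$.
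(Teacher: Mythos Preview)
Your proof is correct and follows essentially the same route as the paper: both parts reduce to the scalar inequalities $t^{q}f(t)\ge 0$ and $t^{q}f(t)\le t^{q+1}-t^{q}$ on the spectrum of $A_s^{-1/2}B_sA_s^{-1/2}$ via the functional calculus, followed by conjugation with $A_s^{1/2}$ and integration. Your version is in fact more explicit than the paper's about why the multiplication by $T_s^{q}$ preserves order (commutativity), which is a welcome clarification.
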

\begin{proof}
(i) Since $f$ is a continuous nonnegative function, $X^qf(X) \ge 0$ for every $X \ge 0$ and $q\in\mathbb{R}$. Hence
$$
\left(A_s^{-\frac{1}{2}}B_sA_s^{-\frac{1}{2}}\right)^q f\left(A_s^{-\frac{1}{2}}B_sA_s^{-\frac{1}{2}}\right) \ge 0\,.
$$
Consequently, $\widetilde{S}_q^f(\mathbf{A}|\mathbf{B}) \ge 0$.\\
\\
(ii) Since $f(t) \le t-1$, we have
\begin{align*}
\widetilde{S}_q^f(\mathbf{A}|\mathbf{B})&=\int_TA_s^{\frac{1}{2}}\left(A_s^{-\frac{1}{2}}B_sA_s^{-\frac{1}{2}}\right)^q f\left(A_s^{-\frac{1}{2}}B_sA_s^{-\frac{1}{2}}\right)A_s^{\frac{1}{2}}d\mu(s)\\
& \le  \int_TA_s^{\frac{1}{2}}\left(A_s^{-\frac{1}{2}}B_sA_s^{-\frac{1}{2}}\right)^q \left(A_s^{-\frac{1}{2}}B_sA_s^{-\frac{1}{2}}-I_{\mathscr H}\right)A_s^{\frac{1}{2}}d\mu(s)\\
& = \int_T\left(A_s\natural_{q+1} B_s-A_s\natural_q B_s\right)d\mu(s)\,.
\end{align*}
Hence
$$
\widetilde{S}_0^f(\mathbf{A}|\mathbf{B}) \le \int_T\left(A_s\natural_1 B_s-A_s\natural_0 B_s\right)d\mu(s) = \int_T\left(B_s-A_s\right)d\mu(s)\,,
$$
and
\begin{align*}
\widetilde{S}_1^f(\mathbf{A}|\mathbf{B}) & \le \int_T\left(A_s\natural_2 B_s-A_s\natural_1 B_s\right)d\mu(s)\\
&=\int_T\left[A_s^{\frac{1}{2}}\left(A_s^{-\frac{1}{2}}B_sA_s^{-\frac{1}{2}}\right)^2 A_s^{\frac{1}{2}}-A_s^{\frac{1}{2}}\left(A_s^{-\frac{1}{2}}B_sA_s^{-\frac{1}{2}}\right)A_s^{\frac{1}{2}}\right]d\mu(s)\\
&=\int_T\left(B_sA_s^{-1}B_s-B_s\right)d\mu(s)\,.
\end{align*}
\end{proof}
\begin{corollary}\cite[Theorem 5.12]{FMPS}\label{co11}
Assume that $A$ and $B$ are two strictly positive operators in $\mathbb{B}(\mathscr{H})$. Then the relative operator entropy is upper bounded; i.e. $S(A|B) \le B-A$.
\end{corollary}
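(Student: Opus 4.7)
The plan is to obtain this corollary as a direct specialization of Theorem \ref{t2}(ii). First, I would recall the elementary scalar inequality $\log t \le t-1$ for all $t>0$, which supplies the hypothesis needed to apply part (ii) with the choice $f(t)=\log t$. (Strictly speaking, this $f$ is not nonnegative on all of $(0,\infty)$, but part (ii) of Theorem \ref{t2} only uses the pointwise bound $f(t)\le t-1$ and the functional-calculus argument, so the specialization is legitimate.)

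Next, I would take the index set $T$ to consist of a single point with counting measure, and set $A_1=A$, $B_1=B$. Under this trivial continuous field, the generalized relative operator entropy collapses to
\begin{equation*}
\widetilde{S}_0^{\log}(\mathbf{A}|\mathbf{B})=S_0^{\log}(A|B)=A^{\frac{1}{2}}\log\!\left(A^{-\frac{1}{2}}BA^{-\frac{1}{2}}\right)A^{\frac{1}{2}}=S(A|B),
\end{equation*}
recovering the Fujii--Kamei relative operator entropy as already noted right after \eqref{ShEn} in the paper.

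Finally, I would read off the upper bound from Theorem \ref{t2}(ii) at $q=0$: the right-hand side equals
\begin{equation*}
A\natural_{1}B-A\natural_{0}B = B-A,
\end{equation*}
since $A\natural_1 B=B$ and $A\natural_0 B=A$ by definition of $\natural_q$. Combining the two identifications yields $S(A|B)\le B-A$, which is exactly the assertion of the corollary.

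There is essentially no obstacle: the corollary is a one-line deduction from Theorem \ref{t2}(ii), with the only substantive ingredient being the scalar inequality $\log t\le t-1$. The proof therefore reduces to verifying that the $f=\log$, $q=0$, single-point specialization is admissible and then simplifying both sides.
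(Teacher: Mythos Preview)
Your proposal is correct and follows exactly the same route as the paper: specialize Theorem~\ref{t2}(ii) with $T=\{1\}$, $f(t)=\log t$, and $q=0$, invoking the Klein inequality $\log t\le t-1$. Your parenthetical remark about $f=\log$ not being nonnegative is a fair observation, and indeed the proof of part~(ii) uses only the bound $f(t)\le t-1$, so the specialization is admissible.
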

\begin{proof}
By taking $T=\{1\}$, $f(t)=\log t$ and $q=0$ in Theorem \ref{t2} (ii), it follows from the Klein inequality $\log t \le t-1$.
\end{proof}
\begin{corollary}[Information inequality]\cite[Lemma 3.1]{G}
	Given two probability mass functions $\{a_j\}$ and $\{b_j\}$, that is, two countable or finite sequences of nonnegative numbers that sum to one, then
	\begin{equation}\label{eq111}
    \sum_j a_j\log\frac{a_j}{b_j} \ge 0\,,
	\end{equation}
	with equality if and only if $a_j=b_j$, for all $j$.
\end{corollary}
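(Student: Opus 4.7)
The plan is to specialize Theorem~\ref{t2}(ii) (equivalently, Corollary~\ref{co11}) to the purely scalar setting. First I would take $T=\mathbb{N}$ with counting measure and set $A_j=a_j$, $B_j=b_j$, viewing each probability mass as a strictly positive scalar, i.e.\ a $1\times 1$ positive operator. The hypothesis $\sum_j a_j = \sum_j b_j = 1$ then matches the requirement $\sum_j A_j = \sum_j B_j = I_{\mathscr H}$ verbatim.

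Next I would apply Theorem~\ref{t2}(ii) with $f(t)=\log t$ and $q=0$; Klein's inequality $\log t \le t-1$ supplies the needed pointwise domination of $f$, so the theorem yields
$$\widetilde{S}_0^{f}(\mathbf{A}|\mathbf{B}) \;\le\; \sum_j (B_j-A_j).$$
In the commutative case $S_0^{f}(a_j|b_j)=a_j\log(b_j/a_j)$, so the displayed inequality collapses to
$$\sum_j a_j\log\frac{b_j}{a_j} \;\le\; \sum_j(b_j-a_j) \;=\; 1-1 \;=\; 0,$$
and rearranging gives the desired $\sum_j a_j\log(a_j/b_j)\ge 0$.

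For the equality clause I would use that Klein's inequality is tight only at $t=1$; consequently the term-by-term estimate $a_j\log(b_j/a_j)\le b_j-a_j$ is an equality iff $b_j/a_j=1$, so equality in the summed version forces $a_j=b_j$ for every index $j$. The only mild wrinkle is that Theorem~\ref{t2}(ii) assumes strict positivity of each $A_s,B_s$: if $a_j=0$ I invoke the convention $a_j\log(a_j/b_j)=0$, and if $a_j>0$ while $b_j=0$ the inequality is trivially $+\infty\ge 0$. Apart from this boundary bookkeeping, everything reduces to the scalar instance of Corollary~\ref{co11}, so I anticipate no genuine obstacle.
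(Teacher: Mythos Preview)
Your argument is correct and essentially coincides with the paper's: both reduce to the scalar Klein inequality $\log t\le t-1$ applied termwise via Corollary~\ref{co11}/Theorem~\ref{t2}(ii). The only cosmetic difference is packaging---the paper assembles the $a_j,b_j$ into a single diagonal operator and extracts the sum by pairing $S(A|B)\le B-A$ against the all-ones vector, whereas you index scalar $1\times1$ operators over $T=\mathbb{N}$ and let the built-in sum in $\widetilde{S}_0^f$ do that work directly.
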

\begin{proof}\label{r1}
	If we take $A$ and $B$ in Corollary \ref{co11} as follows
	$$
	A=\begin{pmatrix}
	a_1  &     0    &     0    & \cdots \\
	0    &   a_2   &     0    & \cdots \\
	0    &     0     &   a_3  & \cdots\\
	\vdots & \vdots  & \vdots & \ddots
	\end{pmatrix},\quad\quad
	B=\begin{pmatrix}
	b_1  &     0    &     0    & \cdots \\
	0    &   b_2   &     0    & \cdots \\
	0    &     0     &   b_3  & \cdots\\
	\vdots & \vdots  & \vdots & \ddots\\
	\end{pmatrix}\,,
	$$
	then, we have
	\begin{align*}
    S(A|B)
	& = \begin{pmatrix}
	a_1  &     0    &     0    & \cdots \\
	0    &   a_2   &     0    & \cdots \\
	0    &     0     &   a_3  & \cdots\\
	\vdots & \vdots  & \vdots & \ddots
	\end{pmatrix}
	\log
	\begin{pmatrix}
	\frac{b_1}{a_1} &     0                   &     0                  & \cdots \\
	0              & \frac{b_2}{a_2} &     0                  & \cdots\\
	0                      &      0                 & \frac{b_3}{a_3} & \cdots\\
	\vdots               & \vdots               & \vdots                & \ddots
	\end{pmatrix}\\
	& = \begin{pmatrix}
	a_1\log\left(\frac{b_1}{a_1}\right) &                                 0                                  &                              0                     & \cdots \\
	0                                                    & a_2\log\left(\frac{b_2}{a_2}\right) &     0                  & \cdots\\
	0                        &     0       & a_3\log\left(\frac{b_3}{a_3}\right) &           \cdots\\
	\vdots               & \vdots               & \vdots                & \ddots
	\end{pmatrix}\,.
	\end{align*}
	Consequently,
	\begin{align*}
	&\left\langle S(A|B) \begin{pmatrix}
	1\\
	1\\
	1\\
	\vdots
	\end{pmatrix},
	\begin{pmatrix}
	1\\
	1\\
	1\\
	\vdots
	\end{pmatrix}\right\rangle\\
	&=\left\langle
	\begin{pmatrix}
	a_1\log\left(\frac{b_1}{a_1}\right) &                                 0                                  &                              0                     & \cdots \\
	0                                                    & a_2\log\left(\frac{b_2}{a_2}\right) &     0                  & \cdots\\
	0                        &     0       & a_3\log\left(\frac{b_3}{a_3}\right) &           \cdots\\
	\vdots               & \vdots               & \vdots                & \ddots
	\end{pmatrix}
	\begin{pmatrix}
	1\\
	1\\
	1\\
	\vdots
	\end{pmatrix},
	\begin{pmatrix}
	1\\
	1\\
	1\\
	\vdots
	\end{pmatrix}\right\rangle\\
	&=\sum_{j} a_j \log\left(\frac{b_j}{a_j}\right)\,.
	\end{align*}
	On the other hand, 
	\begin{align*}
	\left\langle (B-A)
	\begin{pmatrix}
	1\\
	1\\
	1\\
	\vdots
	\end{pmatrix},
	\begin{pmatrix}
	1\\
	1\\
	1\\
	\vdots
	\end{pmatrix}\right\rangle
	&=\left\langle 
	\begin{pmatrix}
	b_1-a_1  &     0    &     0    & \cdots \\
	0    &   b_2-a_2   &     0    & \cdots \\
	0    &     0     &   b_3-a_3  & \cdots\\
	\vdots & \vdots  & \vdots & \ddots
	\end{pmatrix}
		\begin{pmatrix}
	1\\
	1\\
	1\\
	\vdots
	\end{pmatrix},
	\begin{pmatrix}
	1\\
	1\\
	1\\
	\vdots
	\end{pmatrix}\right\rangle\\
	&=\sum_{j}(b_j-a_j)=0\,.
	\end{align*}
	We therefore deduce the desired inequality \eqref{eq111}.
\end{proof}

In the next theorem, we show that the generalized relative operator entropy is subadditive.
\begin{theorem}\label{t3}
For $q=0$, the generalized relative operator entropy is subadditive,
$$
\widetilde{S}_0^f(\mathbf{A}+\mathbf{B}|\mathbf{C}+\mathbf{D}) \ge \widetilde{S}_0^f(\mathbf{A}|\mathbf{C})+\widetilde{S}_0^f(\mathbf{B}|\mathbf{D})\,.
$$
\end{theorem}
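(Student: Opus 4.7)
The plan is to reduce the statement to a pointwise inequality and then integrate. Specifically, it suffices to show that for every $s\in T$,
$$
S_0^f(A_s+B_s\mid C_s+D_s)\ \ge\ S_0^f(A_s\mid C_s)+S_0^f(B_s\mid D_s),
$$
because integrating both sides against $d\mu(s)$ and using linearity of the Bochner integral in the definition \eqref{ShEn321} of $\widetilde{S}_0^f$ yields the claim. So the entire content of the theorem is the pointwise superadditivity of $S_0^f$.

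To prove this pointwise superadditivity I would invoke two properties of $S_0^f(A|B)=A^{1/2}f(A^{-1/2}BA^{-1/2})A^{1/2}$ when $f$ is operator concave on $(0,\infty)$: (i) \emph{positive homogeneity}, $S_0^f(\lambda A|\lambda B)=\lambda S_0^f(A|B)$ for $\lambda>0$, which is immediate from the cancellation $(\lambda A)^{-1/2}(\lambda B)(\lambda A)^{-1/2}=A^{-1/2}BA^{-1/2}$; and (ii) \emph{joint concavity},
$$
S_0^f\!\bigl(\tfrac{A_1+A_2}{2}\,\big|\,\tfrac{B_1+B_2}{2}\bigr)\ \ge\ \tfrac{1}{2}\bigl(S_0^f(A_1|B_1)+S_0^f(A_2|B_2)\bigr).
$$
Combining these (multiply the midpoint inequality by $2$ and apply (i)) gives $S_0^f(A_1+A_2|B_1+B_2)\ge S_0^f(A_1|B_1)+S_0^f(A_2|B_2)$, which is exactly what is needed after specializing to $A_1=A_s$, $A_2=B_s$, $B_1=C_s$, $B_2=D_s$.

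The hard part, and the step that uses operator concavity in an essential way, is the joint concavity (ii). I would derive it by a $2\times 2$ block/transformer-inequality trick. Put $A:=A_1\oplus A_2$, $B:=B_1\oplus B_2$ on $\mathscr{H}\oplus\mathscr{H}$, and let $V:\mathscr{H}\to\mathscr{H}\oplus\mathscr{H}$ be the isometry $Vx=\frac{1}{\sqrt2}(x,x)$, so that $V^*V=I_{\mathscr H}$, $V^*AV=\tfrac{1}{2}(A_1+A_2)$ and $V^*BV=\tfrac{1}{2}(B_1+B_2)$. Because $f$ respects the block-diagonal functional calculus, $S_0^f(A|B)=S_0^f(A_1|B_1)\oplus S_0^f(A_2|B_2)$, and hence $V^*S_0^f(A|B)V=\tfrac{1}{2}\bigl(S_0^f(A_1|B_1)+S_0^f(A_2|B_2)\bigr)$. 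Thus (ii) reduces to the transformer inequality
$$
V^*S_0^f(A|B)V\ \le\ S_0^f\bigl(V^*AV\ \big|\ V^*BV\bigr),
$$
which in turn follows by applying the Davis--Choi--Jensen inequality \eqref{jen} to the \emph{normalized} positive linear map $\Phi(X)=(V^*AV)^{-1/2}V^*A^{1/2}XA^{1/2}V(V^*AV)^{-1/2}$ (unitality is checked directly, $\Phi(I)=I_{\mathscr H}$) at the argument $X=A^{-1/2}BA^{-1/2}$, and then conjugating the resulting inequality by $(V^*AV)^{1/2}$. Putting everything together gives the pointwise superadditivity, and integration over $T$ completes the proof.
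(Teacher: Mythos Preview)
Your proof is correct, and at its core it uses the same mechanism as the paper: the Davis--Choi--Jensen inequality \eqref{jen} applied to the unital positive linear map built from the ``weights'' $A_s^{1/2}(A_s+B_s)^{-1/2}$ and $B_s^{1/2}(A_s+B_s)^{-1/2}$. If you unwind your block map $\Phi(X)=(V^*AV)^{-1/2}V^*A^{1/2}XA^{1/2}V(V^*AV)^{-1/2}$ on diagonal inputs, you recover exactly the map $Z_1\oplus Z_2\mapsto X_s^*Z_1X_s+Y_s^*Z_2Y_s$ that the paper writes down directly with $X_s=A_s^{1/2}(A_s+B_s)^{-1/2}$, $Y_s=B_s^{1/2}(A_s+B_s)^{-1/2}$.

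The organizational difference is that you go via midpoint joint concavity and then use homogeneity to upgrade to superadditivity, whereas the paper proves superadditivity \emph{first} (applying Jensen directly, with no factor of $\tfrac12$ in sight) and only afterwards derives joint concavity from it (their Theorem~\ref{t4}, using Theorem~\ref{t3} together with Lemma~\ref{2.9}). So you have inverted the paper's logical order. Your packaging via the isometry $V$ and the transformer inequality is slightly more abstract and makes the connection to informational monotonicity (Theorem~\ref{t5}) transparent; the paper's bare-hands computation with $X_s,Y_s$ is shorter and avoids the detour through concavity altogether.
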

\begin{proof}
Without loss of generality, we assume that $A_s$ and $B_s\quad(s\in T)$ are invertible. Put $X_s=A_s^{1/2}(A_s+B_s)^{-1/2}$ and $Y_s=B_s^{1/2}(A_s+B_s)^{-1/2}$. Therefore,
{\footnotesize
\begin{align*}
X_s^*X_s+Y_s^*Y_s=&\left[(A_s+B_s)^{-\frac{1}{2}}A_s^{\frac{1}{2}}\right]\left[A_s^{\frac{1}{2}}(A_s+B_s)^{-\frac{1}{2}}\right]
+\left[(A_s+B_s)^{-\frac{1}{2}}B_s^{\frac{1}{2}}\right]\left[B_s^{\frac{1}{2}}(A_s+B_s)^{-\frac{1}{2}}\right]\\
=&(A_s+B_s)^{-\frac{1}{2}}A_s(A_s+B_s)^{-\frac{1}{2}}+(A_s+B_s)^{-\frac{1}{2}}B_s(A_s+B_s)^{-\frac{1}{2}}\\
=&(A_s+B_s)^{-\frac{1}{2}}(A_s+B_s)(A_s+B_s)^{-\frac{1}{2}}\\
=&I_{\mathscr H}\,,
\end{align*}}
this implies that
{\footnotesize
\begin{eqnarray*}
&&\widetilde{S}_0^f(\mathbf{A}+\mathbf{B}|\mathbf{C}+\mathbf{D})=\int_T{S}_0^f(A_s+B_s|C_s+D_s)d\mu(s)\\
&&=\int_T(A_s+B_s)^{\frac{1}{2}} f\left[(A_s+B_s)^{-\frac{1}{2}}(C_s+D_s)(A_s+B_s)^{-\frac{1}{2}}\right](A_s+B_s)^{\frac{1}{2}}d\mu(s)\\
&&=\int_T(A_s+B_s)^{\frac{1}{2}} f\left[(A_s+B_s)^{-\frac{1}{2}}C_s(A_s+B_s)^{-\frac{1}{2}}+(A_s+B_s)^{-\frac{1}{2}}D_s(A_s+B_s)^{-\frac{1}{2}}\right]\\
&&\quad\times(A_s+B_s)^{\frac{1}{2}}d\mu(s)\\
&&=\int_T(A_s+B_s)^{\frac{1}{2}} f\left[X_s^*\left(A_s^{-\frac{1}{2}}C_sA_s^{-\frac{1}{2}}\right)X_s+Y_s^*\left(B_s^{-\frac{1}{2}}D_sB_s^{-\frac{1}{2}}\right)Y_s\right](A_s+B_s)^{\frac{1}{2}}d\mu(s)\\
&& \ge \int_T(A_s+B_s)^{\frac{1}{2}} \left[X_s^*f\left(A_s^{-\frac{1}{2}}C_sA_s^{-\frac{1}{2}}\right)X_s+Y_s^*f\left(B_s^{-\frac{1}{2}}D_sB_s^{-\frac{1}{2}}\right)Y_s\right](A_s+B_s)^{\frac{1}{2}}d\mu(s)\\
&&\qquad \hspace{8cm}(\text{by concavity of }f)\\
&&=\int_T\left[A_s^{\frac{1}{2}}f\left(A_s^{-\frac{1}{2}}C_sA_s^{-\frac{1}{2}}\right)
A_s^{\frac{1}{2}}+B_s^{\frac{1}{2}}f\left(B_s^{-\frac{1}{2}}D_sB_s^{-\frac{1}{2}}\right)B_s^{\frac{1}{2}}\right]d\mu(s)\\
&&= \int_T{S}_0^f(A_s|C_s)d\mu(s)+\int_TS_0^f(B_s|D_s)d\mu(s)\\
&&= \widetilde{S}_0^f(\mathbf{A}|\mathbf{C})+\widehat{S}_0^f(\mathbf{B}|\mathbf{D})\,.
\end{eqnarray*}}
\end{proof}
\begin{lemma}\label{2.9}
The generalized relative operator entropy is homogenous; i.e. for any $0\neq \alpha \in \mathbb R$
$$
\widetilde{S}_q^f(\alpha\mathbf A|\alpha\mathbf B)=\alpha \widetilde{S}_q^f(\mathbf A|\mathbf B)\,,
$$
where $\alpha\mathbf A=(\alpha A_s)_{s\in T}$.
\end{lemma}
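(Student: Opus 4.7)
The plan is to reduce the integral identity to a pointwise identity at each $s\in T$, namely $S_q^f(\alpha A_s|\alpha B_s)=\alpha\, S_q^f(A_s|B_s)$, and then integrate, since by definition $\widetilde{S}_q^f(\mathbf{A}|\mathbf{B})=\int_T S_q^f(A_s|B_s)d\mu(s)$ is linear in the integrand.

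The pointwise step hinges on the scaling behaviour of the operator $A_s^{-1/2}B_sA_s^{-1/2}$ on which $t\mapsto t^qf(t)$ is evaluated. For $\alpha>0$ we have $(\alpha A_s)^{\pm 1/2}=\alpha^{\pm 1/2}A_s^{\pm 1/2}$, and a direct computation gives
\begin{equation*}
(\alpha A_s)^{-\frac{1}{2}}(\alpha B_s)(\alpha A_s)^{-\frac{1}{2}}
= \alpha^{-\frac{1}{2}}\cdot\alpha\cdot\alpha^{-\frac{1}{2}}\,A_s^{-\frac{1}{2}}B_sA_s^{-\frac{1}{2}}
= A_s^{-\frac{1}{2}}B_sA_s^{-\frac{1}{2}}.
\end{equation*}
Hence both $(\cdot)^q$ and $f(\cdot)$ applied to this operator are unchanged by the scaling, while the two outer square roots $(\alpha A_s)^{1/2}$ contribute a factor $\alpha^{1/2}\cdot\alpha^{1/2}=\alpha$. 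This yields $S_q^f(\alpha A_s|\alpha B_s)=\alpha\, S_q^f(A_s|B_s)$ at every $s\in T$, and integrating against $\mu$ produces the claim.

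There is no serious obstacle in this argument; it is essentially a change-of-scale check that bypasses the operator concavity, Jensen-type, or Mond--Pe\v{c}ari\'c tools used elsewhere in the paper. The only subtlety worth flagging is that the stated hypothesis $\alpha\neq 0$ really needs to be $\alpha>0$, since for $\alpha<0$ the operator $\alpha A_s$ fails to be strictly positive and the square root $(\alpha A_s)^{1/2}$ appearing in the very definition of $S_q^f(\alpha A_s|\alpha B_s)$ is not available within the functional-calculus framework set up earlier in the paper.
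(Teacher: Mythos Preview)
Your argument is correct and is exactly the natural verification: the inner operator $(\alpha A_s)^{-1/2}(\alpha B_s)(\alpha A_s)^{-1/2}$ is scale-invariant, so the only contribution of $\alpha$ comes from the two outer factors $(\alpha A_s)^{1/2}$, and integration over $T$ is linear. The paper itself states Lemma~\ref{2.9} without proof, so there is nothing to compare against beyond this direct computation; your remark that the hypothesis should read $\alpha>0$ rather than $\alpha\neq 0$ is a valid correction, since strict positivity of $\alpha A_s$ is needed for the functional calculus defining $S_q^f$.
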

\begin{theorem}\label{t4}
For $q=0$, the generalized relative operator entropy is jointly concave; i.e. if $\mathbf{A}=\alpha \mathbf{A}_1+\beta \mathbf{A}_2$ and $\mathbf{B}=\alpha \mathbf{B}_1+\beta \mathbf{B}_2$ for $\alpha, \beta >0$ with $\alpha+\beta=1$, then
$$
\widetilde{S}_0^f(\mathbf{A}|\mathbf{B}) \ge \alpha \widetilde{S}_0^f(\mathbf{A}_1|\mathbf{B}_1)+\beta S_0^f(\mathbf{A}_2|\mathbf{B}_2)\,.
$$
\end{theorem}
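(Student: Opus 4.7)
The strategy is to reduce joint concavity to two results already established earlier in the paper: the subadditivity of $\widetilde{S}_0^f$ proved in Theorem \ref{t3} and the positive homogeneity of $\widetilde{S}_q^f$ proved in Lemma \ref{2.9}. No new integral manipulation or Jensen-type argument should be needed, since the additive and scalar structure combine to give concavity automatically, exactly as in the classical scalar situation.

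The steps I would carry out, in order, are the following. First, I rewrite the arguments of the left-hand side as $\mathbf{A}=(\alpha\mathbf{A}_1)+(\beta\mathbf{A}_2)$ and $\mathbf{B}=(\alpha\mathbf{B}_1)+(\beta\mathbf{B}_2)$, treating $\alpha\mathbf{A}_1,\beta\mathbf{A}_2$ and $\alpha\mathbf{B}_1,\beta\mathbf{B}_2$ as continuous fields of strictly positive operators (using $\alpha,\beta>0$ to preserve strict positivity). Second, I apply the subadditivity inequality from Theorem \ref{t3} to these two pairs of fields to obtain
\[
\widetilde{S}_0^f\bigl(\alpha\mathbf{A}_1+\beta\mathbf{A}_2\,\bigl|\,\alpha\mathbf{B}_1+\beta\mathbf{B}_2\bigr)\;\ge\;\widetilde{S}_0^f(\alpha\mathbf{A}_1|\alpha\mathbf{B}_1)+\widetilde{S}_0^f(\beta\mathbf{A}_2|\beta\mathbf{B}_2).
\]
Third, I invoke the homogeneity relation of Lemma \ref{2.9} with the nonzero positive scalars $\alpha$ and $\beta$ to rewrite the two summands on the right as $\alpha\,\widetilde{S}_0^f(\mathbf{A}_1|\mathbf{B}_1)$ and $\beta\,\widetilde{S}_0^f(\mathbf{A}_2|\mathbf{B}_2)$ respectively, which yields precisely the claimed inequality.

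Since the two ingredients are already available, the only subtlety is bookkeeping: one must verify that the scaled fields $\alpha\mathbf{A}_1,\beta\mathbf{A}_2$ and $\alpha\mathbf{B}_1,\beta\mathbf{B}_2$ still satisfy the hypotheses required by Theorem \ref{t3} (namely, strict positivity componentwise and joint measurability/continuity of the field), which is immediate from $\alpha,\beta>0$. Thus there is no genuinely hard step in the proof; joint concavity at $q=0$ falls out mechanically from superadditivity combined with positive homogeneity.
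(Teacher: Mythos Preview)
Your proposal is correct and follows exactly the same approach as the paper: apply the subadditivity of Theorem~\ref{t3} to the decomposition $(\alpha\mathbf{A}_1,\alpha\mathbf{B}_1)$ and $(\beta\mathbf{A}_2,\beta\mathbf{B}_2)$, then use the homogeneity of Lemma~\ref{2.9} to pull out the scalars $\alpha$ and $\beta$. The paper's proof merely unfolds the integral definition while doing this, but the argument is identical.
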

\begin{proof}
Assume that $\mathbf{A}_k=(A_{ks})$ and $\mathbf{B}_k=(B_{ks})~~(k=1, 2,~s\in T)$. By using of subadditivity and homogeneity of the generalized relative operator entropy, we get
\begin{align*}
\widetilde{S}_0^f(\mathbf{A}|\mathbf{B})&=\widetilde{S}_0^f(\alpha \mathbf{A}_1+\beta \mathbf{A}_2|\alpha \mathbf{B}_1+\beta \mathbf{B}_2)\\
&=\int_T S_0^f(\alpha A_{1s}+\beta A_{2s}|\alpha B_{1s}+\beta B_{2s})d\mu(s)\\
& \ge \int_T\left[S_0^f(\alpha A_{1s}|\alpha B_{1s})+S_0^f(\beta A_{2s}|\beta B_{2s})\right]d\mu(s)\\
& = \int_T\left[\alpha S_0^f(A_{1s}|B_{1s})+\beta S_0^f(A_{2s}|B_{2s})\right]d\mu(s)\\
&= \alpha \widetilde{S}_0^f(\mathbf{A}_1|\mathbf{B}_1)+\beta \widetilde{S}_0^f(\mathbf{A}_2|\mathbf{B}_2)\,.
\end{align*}
\end{proof}

We say that $\mathbf{A}=(A_s)_{s\in T}$ is \textit{invertible}, if $A_s$'s are invertible for every $s\in T$.

In the following theorem, we show that the generalized relative operator entropy has informational monotonicity.
\begin{theorem}\label{t5}
For $q=0$ and $\Phi \in \mathbf{P}_N[\mathbb{B}(\mathscr{H}),\mathbb{B}(\mathscr{K})]$,
$$
\Phi\left(\widetilde{S}_0^f(\mathbf{A}|\mathbf{B})\right) \le \widetilde{S}_0^f\left(\Phi(\mathbf{A})|\Phi(\mathbf{B})\right)\,.
$$
\end{theorem}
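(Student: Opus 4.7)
My plan is to reduce the integrated inequality to a pointwise one and then derive the pointwise inequality by a direct application of the Davis--Choi--Jensen inequality \eqref{jen} to a carefully chosen auxiliary unital positive linear map.

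First I would note that, since $\Phi$ is bounded and linear, it commutes with the Bochner integral: for every $\varphi \in \mathbb{B}(\mathscr{K})^*$ the functional $\varphi\circ\Phi$ lies in $\mathbb{B}(\mathscr{H})^*$, so \eqref{e1} forces $\Phi\bigl(\int_T X_s\,d\mu(s)\bigr) = \int_T \Phi(X_s)\,d\mu(s)$. Applying this with $X_s = S_0^f(A_s|B_s)$ reduces the theorem to the pointwise claim
\begin{equation*}
\Phi\bigl(S_0^f(A|B)\bigr) \le S_0^f\bigl(\Phi(A)\,\big|\,\Phi(B)\bigr)
\end{equation*}
for strictly positive $A, B \in \mathbb{B}(\mathscr{H})$.

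Next I would introduce the auxiliary map $T: \mathbb{B}(\mathscr{H}) \to \mathbb{B}(\mathscr{K})$ defined by
\begin{equation*}
T(X) := \Phi(A)^{-1/2}\,\Phi\bigl(A^{1/2}XA^{1/2}\bigr)\,\Phi(A)^{-1/2}.
\end{equation*}
Since $A \ge \delta I_{\mathscr{H}}$ for some $\delta > 0$ and $\Phi$ is unital positive, $\Phi(A) \ge \delta I_{\mathscr{K}}$ is invertible, so $T$ is well defined. One then checks that $T \in \mathbf{P}_N[\mathbb{B}(\mathscr{H}),\mathbb{B}(\mathscr{K})]$: linearity and positivity pass from $\Phi$ to $T$, and $T(I_{\mathscr{H}}) = \Phi(A)^{-1/2}\Phi(A)\Phi(A)^{-1/2} = I_{\mathscr{K}}$. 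Setting $Y := A^{-1/2}BA^{-1/2}$, whose spectrum lies in $(0,\infty)$ where the nonnegative operator monotone function $f$ is operator concave, inequality \eqref{jen} applied to $T$ and $Y$ gives $T(f(Y)) \le f(T(Y))$.

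The argument then finishes by a direct identification: from the definitions, $T(f(Y)) = \Phi(A)^{-1/2}\Phi(S_0^f(A|B))\Phi(A)^{-1/2}$ and $T(Y) = \Phi(A)^{-1/2}\Phi(B)\Phi(A)^{-1/2}$, and multiplying both sides of $T(f(Y))\le f(T(Y))$ by $\Phi(A)^{1/2}$ on the left and right yields exactly the pointwise inequality displayed above. The only conceptually nontrivial step is spotting the right definition of $T$ so that the problem collapses into a single invocation of \eqref{jen}; all the remaining work is routine manipulation, and I do not expect any further obstacle.
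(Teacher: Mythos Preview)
Your proposal is correct and follows essentially the same route as the paper: reduce to a pointwise statement via the Bochner-integral property \eqref{e1}, introduce the auxiliary unital positive map $X\mapsto \Phi(A)^{-1/2}\Phi(A^{1/2}XA^{1/2})\Phi(A)^{-1/2}$ (the paper calls it $\Psi$), apply the Davis--Choi--Jensen inequality \eqref{jen}, and conjugate back by $\Phi(A)^{1/2}$. The only cosmetic issue is that you named this auxiliary map $T$, which clashes with the paper's use of $T$ for the index space of the continuous field; renaming it would avoid confusion.
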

\begin{proof}
Assume that $\mathbf{A}$ is invertible. Then so does $\Phi(\mathbf{A})=\big(\Phi(A_s)\big)_{s\in T}$. Define
$$
\Psi(X)=\Phi(A_s)^{-\frac{1}{2}}\Phi\left(A_s^{\frac{1}{2}}XA_s^{\frac{1}{2}}\right)\Phi(A_s)^{-\frac{1}{2}}\,.
$$
So $\Psi$ is a normalized positive linear map. Consequently
{\footnotesize
\begin{align*}
\Phi\left(\widetilde{S}_0^f (\mathbf{A}|\mathbf{B})\right)&=\int_T\Phi\left(S_0^f(A_s|B_s)\right)d\mu(s)\quad\quad\quad\quad\quad\quad\quad\quad\quad(\text{by \eqref{e1}})\\
&=\int_T\Phi\left(A_s^{\frac{1}{2}}f\left(A_s^{-\frac{1}{2}}B_sA_s^{-\frac{1}{2}}\right)A_s^{\frac{1}{2}}\right)d\mu(s)\\
&=\int_T\Phi(A_s)^{\frac{1}{2}}\left[\Phi(A_s)^{-\frac{1}{2}} \Phi\left(A_s^{\frac{1}{2}}f\left(A_s^{-\frac{1}{2}}B_sA_s^{-\frac{1}{2}}\right)A_s^{\frac{1}{2}}\right) \Phi(A_s)^{-\frac{1}{2}}\right]\Phi(A_s)^{\frac{1}{2}}d\mu(s)\\
&=\int_T\Phi(A_s)^{\frac{1}{2}}\Psi\left(f\left(A_s^{-\frac{1}{2}}B_sA_s^{-\frac{1}{2}}\right)\right)\Phi(A_s)^{\frac{1}{2}}d\mu(s)\\
&\le\int_T\Phi(A_s)^{\frac{1}{2}}f\left(\Psi\left(A_s^{-\frac{1}{2}}B_sA_s^{-\frac{1}{2}}\right)\right)\Phi(A_s)^{\frac{1}{2}}d\mu(s)\quad\quad\quad\quad(\text{by } \eqref{jen})\\
&=\int_T\Phi(A_s)^{\frac{1}{2}}f\left[\Phi(A_s)^{-\frac{1}{2}}\Phi\left(A_s^{\frac{1}{2}}A_s^{-\frac{1}{2}}B_sA_s^{-\frac{1}{2}}A_s^{\frac{1}{2}}\right)\Phi(A_s)^{-\frac{1}{2}}\right]
\Phi(A_s)^{\frac{1}{2}}d\mu(s)\\
&=\int_T\Phi(A_s)^{\frac{1}{2}}f\left[\Phi(A_s)^{-\frac{1}{2}}\Phi(B_s)\Phi(A_s)^{-\frac{1}{2}}\right]\Phi(A_s)^{\frac{1}{2}}d\mu(s)\\
&=\int_TS_0^f\left(\Phi(A_s)|\Phi(B_s)\right)d\mu(s)\\
&= \widetilde{S}_0^f\left(\Phi(\mathbf{A})|\Phi(\mathbf{B})\right)\,.
\end{align*}}
\end{proof}
\section{Some  operator entropy inequalities}
There is an impressive method for finding inverses of some operator inequalities. It was introduced for investigation of converses of the Jensen inequality associated with convex functions, see \cite{FMPS} and \cite{MT} and references therein. We need  essence of the Mond--Pe\v{c}ari\'c method where appeared in \cite[Chapter 2]{FMPS} in some general forms.\\
If $f$ is a strictly  concave differentiable function on an interval $[m,M]$ with $m<M$ and  $\Phi:\mathbb{B}(\mathscr{H})\longrightarrow\mathbb{B}(\mathscr{K})$ is a positive unital linear map,

{\footnotesize\begin{eqnarray}\label{gama}
\mu_f=\frac{f(M)-f(m)}{M-m},\,\, \nu_f=\frac{Mf(m)-mf(M)}{M-m}\,\,{\rm~and~}\,\,\gamma_f=\displaystyle{\max\left\{\frac{f(t)}{\mu_f t+\nu_f}: m\leq t\leq M\right\}},
\end{eqnarray}}
  then
\begin{eqnarray}\label{mond1}
f(\Phi(A))\leq\gamma_f \Phi(f(A)).
\end{eqnarray}

\begin{lemma}\label{reverses1}
Assume that $X,C_s\in\mathscr{A}\,\,(s\in T)$ such that $0<m\leq X \leq M$, $f:(0,\infty) \to [0,\infty)$ is an operator monotone function, $t_0\in[m,M]$ and $\gamma_f$ is given by \eqref{gama}.
 Then
\begin{align*}
f & \left(\int_TC_s^*XC_sd\mu(s)+t_0\left(I_{\mathscr H}-\int_TC_s^*C_sd\mu(s)\right)\right)\\
& \leq \gamma_f\left[\int_TC_s^*f(X)C_sd\mu(s)+f(t_0)\left(I_{\mathscr H}-\int_TC_s^*C_sd\mu(s)\right)\right]\,,
\end{align*}
where  $\int_TC_s^*C_sd\mu(s)\leq I_{\mathscr H}$.
\end{lemma}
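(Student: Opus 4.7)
The plan is to mimic the proof of Lemma \ref{2.3} verbatim, but substitute the Mond--Pe\v{c}ari\'c reverse inequality \eqref{mond1} in place of the Davis--Choi--Jensen inequality \eqref{jen}. Concretely, I would first set $D = \bigl(I_{\mathscr H} - \int_T C_s^* C_s\,d\mu(s)\bigr)^{1/2}$, which is well-defined and positive by the hypothesis $\int_T C_s^* C_s\,d\mu(s) \le I_{\mathscr H}$. Then I would build the block linear map
\[
\Phi\!\left(\begin{bmatrix} X & 0 \\ 0 & Y \end{bmatrix}\right) = \int_T C_s^* X C_s\,d\mu(s) + D^* Y D
\qquad (X,Y \in \mathscr A),
\]
and verify that it is positive and unital: evaluating on $I \oplus I$ gives $\int_T C_s^* C_s\,d\mu(s) + D^* D = I_{\mathscr H}$.

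Next I would observe that the block operator $\operatorname{diag}(X, t_0)$ has spectrum contained in $[m,M]$, since $m \le X \le M$ and $t_0 \in [m,M]$. This is the condition required to apply the Mond--Pe\v{c}ari\'c estimate \eqref{mond1} to the positive unital linear map $\Phi$ and the concave (equivalently, nonnegative operator monotone) function $f$:
\[
f\!\left(\Phi\!\left(\operatorname{diag}(X, t_0)\right)\right) \le \gamma_f \,\Phi\!\left(f(\operatorname{diag}(X, t_0))\right).
\]
Using functional calculus, $f(\operatorname{diag}(X, t_0)) = \operatorname{diag}(f(X), f(t_0))$, and evaluating $\Phi$ on both sides yields
\[
f\!\left(\int_T C_s^* X C_s\,d\mu(s) + t_0 D^* D\right) \le \gamma_f\!\left(\int_T C_s^* f(X) C_s\,d\mu(s) + f(t_0) D^* D\right),
\]
which, upon substituting $D^* D = I_{\mathscr H} - \int_T C_s^* C_s\,d\mu(s)$, is exactly the desired inequality.

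The only subtle point, and the single step I would spend care on, is confirming that the Mond--Pe\v{c}ari\'c inequality \eqref{mond1} is truly applicable to our $\Phi$. The cited form requires $f$ to be strictly concave and differentiable on $[m,M]$ and $\Phi$ to be positive unital; the former is covered since the operator concavity of nonnegative operator monotone $f$ on $(0,\infty)$ (together with the standing positivity hypotheses) provides the setting, while the latter was just verified. Modulo this routine check, the argument is a direct transcription of the proof of Lemma \ref{2.3} with inequality \eqref{jen} replaced by its Mond--Pe\v{c}ari\'c reverse, so no additional technical obstacle is expected.
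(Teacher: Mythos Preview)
Your proposal is correct and follows exactly the approach indicated in the paper, which merely states that the proof is similar to that of Lemma~\ref{2.3} upon replacing the Davis--Choi--Jensen inequality \eqref{jen} by the Mond--Pe\v{c}ari\'c reverse \eqref{mond1}. You have simply written out the details the paper leaves implicit.
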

\begin{proof}
By using \eqref{mond1}, the proof is similar to Lemma \ref{2.3}.
\end{proof}

\begin{theorem}\label{tShEnIn}
Let $\mathbf{A}=(A_t)_{t\in T}, \mathbf{B}=(B_t)_{t\in T}$ be continuous fields of strictly positive operators such that  $0<m A_s \leq B_s \leq M A_s\,\,(s\in T)$  for some positive real numbers $m, M$, where $m<1<M$,  $\int_TA_sd\mu(s)=\int_TB_sd\mu(s)=I_{\mathscr H}$, $f: (0,\infty) \to [0,\infty)$ be  operator
concave and $p\in[0,1]$. Then
{\footnotesize
\begin{align}\label{ShEnInre}
f&\left[\int_T(A_s\natural_{p+1}B_s)d\mu(s)+t_0\left(I_{\mathscr H}-\int_TA_s\natural_pB_sd\mu(s)\right)\right]-\gamma_ff(t_0)\left(I_{\mathscr H}-\int_TA_s\natural_pB_sd\mu(s)\right)\nonumber\\
&\le \gamma_f\widetilde{S}_p^f(\mathbf{A}|\mathbf{B}),
\end{align}}
where $t_0\in[m,M]$ and $\gamma_f$ is given by \eqref{gama}.
\end{theorem}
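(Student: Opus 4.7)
The plan is to mirror the proof of Theorem \ref{tShEnIn000} step by step, simply replacing the application of Lemma \ref{2.3} by its Mond--Pe\v{c}ari\'c counterpart, Lemma \ref{reverses1}. So first, I would note that by \eqref{kian12} together with the normalization hypotheses $\int_T A_s\,d\mu(s)=\int_T B_s\,d\mu(s)=I_{\mathscr H}$, we get $\int_T A_s\natural_p B_s\,d\mu(s)\le I_{\mathscr H}$ for every $p\in[0,1]$, so the coefficient $I_{\mathscr H}-\int_T A_s\natural_p B_s\,d\mu(s)$ that appears in \eqref{ShEnInre} is positive and the hypothesis $\int_T C_s^*C_s\,d\mu(s)\le I_{\mathscr H}$ of Lemma \ref{reverses1} can be verified with the correct choice of $C_s$.

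Next, following the rewriting used in the proof of Theorem \ref{tShEnIn000}, I would set
$$X_s=A_s^{-\frac{1}{2}}B_sA_s^{-\frac{1}{2}},\qquad C_s=\bigl(A_s^{-\frac{1}{2}}B_sA_s^{-\frac{1}{2}}\bigr)^{\frac{p}{2}}A_s^{\frac{1}{2}},$$
so that $C_s^*X_sC_s=A_s^{\frac{1}{2}}X_s^{p+1}A_s^{\frac{1}{2}}=A_s\natural_{p+1}B_s$ and $C_s^*C_s=A_s\natural_p B_s$. The spectral hypothesis $mA_s\le B_s\le MA_s$ forces $mI_{\mathscr H}\le X_s\le MI_{\mathscr H}$, so $t_0\in[m,M]$ is admissible as the ``filler'' point in Lemma \ref{reverses1}.

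With these ingredients, I apply Lemma \ref{reverses1} directly to the bracketed expression on the left of \eqref{ShEnInre}. This yields
\begin{align*}
f&\left[\int_T(A_s\natural_{p+1}B_s)d\mu(s)+t_0\Bigl(I_{\mathscr H}-\int_TA_s\natural_pB_s\,d\mu(s)\Bigr)\right]\\
&\le\gamma_f\!\left[\int_T C_s^*f(X_s)C_s\,d\mu(s)+f(t_0)\Bigl(I_{\mathscr H}-\int_T A_s\natural_p B_s\,d\mu(s)\Bigr)\right],
\end{align*}
and the integral on the right telescopes exactly as in Theorem \ref{tShEnIn000} to $\widetilde{S}_p^f(\mathbf{A}|\mathbf{B})$ using the commutation identity $X_s^{p/2}f(X_s)X_s^{p/2}=X_s^{p}f(X_s)$. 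Moving the $\gamma_f f(t_0)$ term to the left-hand side produces \eqref{ShEnInre}.

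The only real subtlety is making sure that Lemma \ref{reverses1} can be applied when the operator being evaluated depends on the parameter $s$; this is fine because the spectral estimate $mI_{\mathscr H}\le X_s\le MI_{\mathscr H}$ is uniform in $s$, so the constant $\gamma_f$ computed from $[m,M]$ is legitimate, and the positivity condition $\int_T C_s^*C_s\,d\mu(s)=\int_T A_s\natural_p B_s\,d\mu(s)\le I_{\mathscr H}$ has already been secured. Beyond that, the argument is purely algebraic manipulation identical to what was already done in Theorem \ref{tShEnIn000}.
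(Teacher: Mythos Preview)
Your proposal is correct and follows essentially the same route as the paper: both proofs note $\int_T A_s\natural_p B_s\,d\mu(s)\le I_{\mathscr H}$ via \eqref{kian12}, rewrite the argument of $f$ as $\int_T C_s^*X_sC_s\,d\mu(s)+t_0(I_{\mathscr H}-\int_T C_s^*C_s\,d\mu(s))$ with $X_s=A_s^{-1/2}B_sA_s^{-1/2}$ and $C_s=X_s^{p/2}A_s^{1/2}$, apply Lemma~\ref{reverses1} in place of Lemma~\ref{2.3}, and then simplify using $X_s^{p/2}f(X_s)X_s^{p/2}=X_s^{p}f(X_s)$. Your remark about the uniform spectral bound $m\le X_s\le M$ justifying a single $\gamma_f$ is a welcome clarification that the paper leaves implicit.
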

\begin{proof}
It follows from \eqref{kian12} and $\int_TA_sd\mu(s)=\int_TB_sd\mu(s)=I_{\mathscr H}$ that $\int_TA_s \natural_{p} B_s d\mu(s)\le I_{\mathscr H}\,\,(p\in[0,1])$.
\begin{align*}
f\Bigg[&\int_T(A_s\natural_{p+1}B_s)d\mu(s)+t_0\left(I_{\mathscr H}-\int_TA_s\natural_pB_sd\mu(s)\right)\Bigg]\\
=&f\Bigg[\int_T\left(\Big(A_s^{-\frac{1}{2}}B_sA_s^{-\frac{1}{2}}\Big)^{\frac{p}{2}}A_s^{\frac{1}{2}}\right)^*
\Big(A_s^{-\frac{1}{2}}B_sA_s^{-\frac{1}{2}}\Big)\left(\Big(A_s^{-\frac{1}{2}}B_sA_s^{-\frac{1}{2}}\Big)^{\frac{p}{2}}A_s^{\frac{1}{2}}\right)d\mu(s)\\
&+t_0\left(I_{\mathscr H}-\int_TA_s\natural_p B_sd\mu(s)\right)\Bigg]\\
\le &\gamma_f\Bigg[\int_TA_s^{\frac{1}{2}}\left(A_s^{-\frac{1}{2}}B_sA_s^{-\frac{1}{2}}\right)^{\frac{p}{2}}
f\left(A_s^{-\frac{1}{2}}B_sA_s^{-\frac{1}{2}}\right)\left(A_s^{-\frac{1}{2}}B_sA_s^{-\frac{1}{2}}\right)^{\frac{p}{2}}
A_s^{\frac{1}{2}}d\mu(s)\\
&+f(t_0)\left(I_{\mathscr H}-\int_TA_s\natural_pB_sd\mu(s)\right)\Bigg]\quad\quad\quad\quad\mbox{(by Lemma \ref{reverses1})}\\
=&\gamma_f\Bigg[\int_TA_s^{\frac{1}{2}}\left(A_s^{-\frac{1}{2}}B_sA_s^{-\frac{1}{2}}\right)^p
f\left(A_s^{-\frac{1}{2}}B_sA_s^{-\frac{1}{2}}\right)A_s^{\frac{1}{2}}d\mu(s)\\
&+f(t_0)\left(I_{\mathscr H}-\int_TA_s\natural_pB_sd\mu(s)\right)\Bigg]\\
=&\gamma_f\left[\int_TS_p^f(A_s|B_s)d\mu(s)+f(t_0)\left(I_{\mathscr H}-\int_TA_s\natural_pB_sd\mu(s)\right)\right]\,.
\end{align*}
\end{proof}
For the discrete case $T=\{1,2,\cdots,n\}$, we give a reverse of \eqref{ShEnIn}.
\begin{corollary}\label{tShEnIn}
Let  $0<m A_j \leq B_j \leq M A_j\,\,(1\leq j\leq n)$  for some positive real numbers $m, M$  such that $m<1<M$,  $\sum_{j=1}^nA_j=\sum_{j=1}^nB_j=I_{\mathscr H}$ and $f: (0,\infty) \to [0,\infty)$ be  operator
concave. Then
\begin{align}\label{ShEnInre}
f&\left[\sum_{j=1}^n(A_j\natural_{p+1}B_j)+t_0\left(I_{\mathscr H}-\sum_{j=1}^nA_j\natural_pB_j\right)\right]-\gamma_ff(t_0)\left(I_{\mathscr H}-\sum_{j=1}^nA_j\natural_pB_j\right)\nonumber\\
&\le \gamma_fS_p^f(\mathbf{A}|\mathbf{B})\,,
\end{align}
where $t_0\in[m,M]$, $p\in[0,1]$ and $\gamma_f$ is given by \eqref{gama}.
\end{corollary}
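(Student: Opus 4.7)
The plan is to deduce this corollary as a direct specialization of Theorem \ref{tShEnIn} (the continuous version just established). First I would take $T = \{1, 2, \ldots, n\}$ equipped with the counting measure $\mu$. Under this choice, every Bochner integral $\int_T (\cdot)\, d\mu(s)$ collapses to the finite sum $\sum_{j=1}^n (\cdot)$, and the continuous fields $\mathbf{A} = (A_t)_{t \in T}$, $\mathbf{B} = (B_t)_{t \in T}$ reduce to the finite sequences $(A_1, \ldots, A_n)$ and $(B_1, \ldots, B_n)$. The existence of the Bochner integrals is automatic in this finite, norm-continuous setting.

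Next I would verify that every hypothesis of the continuous theorem is satisfied in this specialization: the pointwise bound $0 < m A_j \le B_j \le M A_j$ for each $j$ is exactly the required $0 < m A_s \le B_s \le M A_s$; the constraints $\sum_{j=1}^n A_j = \sum_{j=1}^n B_j = I_{\mathscr H}$ translate to $\int_T A_s\, d\mu(s) = \int_T B_s\, d\mu(s) = I_{\mathscr H}$; and the hypotheses on $f$, $m$, $M$, $t_0$, $p$ carry over unchanged. The quantity $\widetilde{S}_p^f(\mathbf{A}|\mathbf{B})$ defined by \eqref{ShEn321} becomes $\sum_{j=1}^n S_p^f(A_j|B_j) = S_p^f(\mathbf{A}|\mathbf{B})$ by \eqref{ShEn}, while each integrand $A_s \natural_{p+1} B_s$ and $A_s \natural_p B_s$ becomes the corresponding summand.

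Applying Theorem \ref{tShEnIn} under this identification then yields the claimed inequality \eqref{ShEnInre} in its discrete form, with the same constant $\gamma_f$ from \eqref{gama}. I do not foresee any substantive obstacle here, since this is purely a translation between two notational conventions; the only book-keeping required is to match each integral-form quantity appearing in the conclusion of the continuous theorem with its sum-form counterpart in the corollary.
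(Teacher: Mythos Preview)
Your approach is correct and matches the paper's own treatment: the corollary is stated without proof immediately after Theorem \ref{tShEnIn}, preceded only by the sentence ``For the discrete case $T=\{1,2,\cdots,n\}$, we give a reverse of \eqref{ShEnIn}.'' Your specialization via counting measure on $T=\{1,\ldots,n\}$ is exactly what is intended.
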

There is another method to find a reverse of the Choi-Davis-Jensen inequality.
If $f$ is a strictly concave differentiable function on an interval $[m,M]$ with $m<M$ and  $\Phi$ is a unital positive linear map, then
\begin{eqnarray}\label{mond1o1}
\zeta_f I_{\mathscr H}+ \Phi(f(A)) \geq f(\Phi(A)),
\end{eqnarray}
where
{\footnotesize\begin{eqnarray}\label{beta}
\mu_f=\frac{f(M)-f(m)}{M-m},\,\, \nu_f=\frac{Mf(m)-mf(M)}{M-m}\,\,{\rm~and~}\,\,\zeta_f=\max_{m\leq t\leq M}\left\{f(t)-\mu_ft-\nu_f\right\},
\end{eqnarray}}
$A \in {\mathbb B}({\mathscr H})$ is a self-adjoint operator with spectrum in $[m,M]$; see \cite[p. 101]{FMPS}.
\begin{lemma}\label{reverses2}
 Let $X,C_s\in\mathscr{A}\,\,(s\in T)$ such that $\int_TC_s^*C_sd\mu(s)\le I_{\mathscr H}$ and $0<m\leq X \leq M$,  let $f:(0,\infty) \to [0,\infty)$ be an operator monotone function and $t_0\in[m,M]$.
 Then
  \begin{align*}
&f\left(\int_TC_s^*XC_sd\mu(s)+t_0\left(I_{\mathscr H}-\int_TC_s^*C_sd\mu(s)\right)\right)\\
&\leq\int_TC_s^*f(X)C_sd\mu(s)+f(t_0)\left(I_{\mathscr H}-\int_TC_s^*C_sd\mu(s)\right)+\zeta_f,
  \end{align*}
     where $\zeta_f$ is given by \eqref{beta}.
\end{lemma}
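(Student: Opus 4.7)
The plan is to mirror the proof of Lemma \ref{2.3}, but replace the Davis–Choi–Jensen step \eqref{jen} with the Mond–Pe\v{c}ari\'c reverse inequality \eqref{mond1o1}. In other words, the whole argument reduces to packaging the integral and the ``defect'' term $t_0(I_{\mathscr H} - \int_T C_s^*C_s d\mu(s))$ into a single unital positive linear map applied to a block-diagonal operator, and then invoking the converse Jensen inequality rather than Jensen itself.

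Concretely, I would set $D := \bigl(I_{\mathscr H} - \int_T C_s^* C_s d\mu(s)\bigr)^{1/2}$, which exists because of the assumption $\int_T C_s^* C_s d\mu(s) \le I_{\mathscr H}$. Then define a map on block-diagonal elements of $M_2(\mathscr{A})$ by
\begin{equation*}
\Phi\bigl(\mathrm{diag}(X,Y)\bigr) = \int_T C_s^* X C_s\,d\mu(s) + D^* Y D.
\end{equation*}
This $\Phi$ is positive and linear, and a direct check gives $\Phi(I_2) = \int_T C_s^* C_s\,d\mu(s) + D^2 = I_{\mathscr H}$, so it is unital. For the operator $A = \mathrm{diag}(X, t_0)$, the spectrum lies in $[m,M]$ since both $X$ and $t_0$ do, so the hypotheses of \eqref{mond1o1} are met on $[m,M]$.

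Applying \eqref{mond1o1} to $A$ and $\Phi$ yields
\begin{equation*}
f\bigl(\Phi(\mathrm{diag}(X,t_0))\bigr) \le \Phi\bigl(f(\mathrm{diag}(X,t_0))\bigr) + \zeta_f I_{\mathscr H},
\end{equation*}
and expanding both sides according to the definition of $\Phi$ produces exactly the claimed inequality, namely
\begin{equation*}
f\!\left(\int_T C_s^* X C_s\,d\mu(s) + t_0 D^2\right) \le \int_T C_s^* f(X) C_s\,d\mu(s) + f(t_0) D^2 + \zeta_f.
\end{equation*}

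Because the $\Phi$–construction is identical to that of Lemma \ref{2.3} and \eqref{mond1o1} is invoked as a black box, there is no serious obstacle. The only mild subtlety worth checking is that \eqref{mond1o1} is stated for a strictly concave differentiable $f$ on $[m,M]$, which is compatible with our operator-monotone (hence operator-concave) $f$ restricted to $[m,M] \subset (0,\infty)$, and that $\zeta_f$ in the statement is interpreted as the scalar multiple $\zeta_f I_{\mathscr H}$. Everything else is a direct transcription of the computation in Lemma \ref{2.3} with the inequality direction reversed and the extra $\zeta_f$ term tracked.
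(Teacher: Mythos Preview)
Your proposal is correct and follows exactly the paper's own approach: define $D=(I_{\mathscr H}-\int_T C_s^*C_s\,d\mu(s))^{1/2}$, package everything via the unital positive linear map $\Phi(\mathrm{diag}(X,Y))=\int_T C_s^* X C_s\,d\mu(s)+D^*YD$, and apply \eqref{mond1o1} in place of \eqref{jen}. The paper's proof is in fact just a one-line sketch of precisely this argument, so your more detailed write-up (including the spectrum check and the remark on $\zeta_f I_{\mathscr H}$) is a faithful expansion of it.
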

 \begin{proof}
We put $D=(I_{\mathscr H}-\int_TC_s^*C_sd\mu(s))^\frac{1}{2}$.
 Applying  inequality \eqref{mond1o1} for the positive unital linear map $\Phi\left({\rm diag}(X,Y)\right)=\int_TC_s^*XC_sd\mu(s)+D^*YD\,\,(X,Y\in\mathscr{A})$ we get desired inequality.
\end{proof}
Using Lemma \ref{reverses2}, by the same argument in the proof of Theorem \ref{tShEnIn} we give the next result.
\begin{theorem}\label{tShEnIn34545}
Let $\mathbf{A}=(A_t)_{t\in T},\mathbf{B}=(B_t)_{t\in T}$ be continuous fields of strictly positive operators such that  $0<m A_s \leq B_s \leq M A_s\,\,(s\in T)$  for some positive real numbers $m, M$, where $m<1<M$,  $\int_TA_sd\mu(s)=\int_TB_sd\mu(s)=I_{\mathscr H}$, $f: (0,\infty) \to [0,\infty)$ be  operator
concave, $t_0\in[m,M]$ and $p\in[0,1]$. Then
{\footnotesize
\begin{align}\label{ShEnInre}
f&\left[\int_T(A_s\natural_{p+1}B_s)d\mu(s)+t_0\left(I_{\mathscr H}-\int_TA_s\natural_pB_sd\mu(s)\right)\right]-f(t_0)\left(I_{\mathscr H}-\int_TA_s\natural_pB_sd\mu(s)\right)\nonumber\\
&\le \widetilde{S}_p^f(\mathbf{A}|\mathbf{B})+\zeta_f,
\end{align}}
where $\zeta_f$ is given by \eqref{beta}.
\end{theorem}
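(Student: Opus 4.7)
The plan is to mimic the proof of Theorem \ref{tShEnIn} step by step, replacing the multiplicative reverse Jensen step (Lemma \ref{reverses1}) by the additive reverse Jensen step provided by Lemma \ref{reverses2}. The extra term $\zeta_f$ produced by \eqref{mond1o1} is a scalar multiple of $I_{\mathscr H}$ that is simply carried through the calculation and re-emerges on the right-hand side of \eqref{ShEnInre}.

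First I would verify that $\int_T A_s\natural_p B_s\,d\mu(s)\le I_{\mathscr H}$. This is a direct consequence of \eqref{kian12} together with the normalizations $\int_T A_s\,d\mu(s)=\int_T B_s\,d\mu(s)=I_{\mathscr H}$, since then the right-hand side of \eqref{kian12} becomes $I_{\mathscr H}\natural_p I_{\mathscr H}=I_{\mathscr H}$. Next, set $X_s=A_s^{-1/2}B_sA_s^{-1/2}$ and $C_s=X_s^{p/2}A_s^{1/2}$. The hypothesis $mA_s\le B_s\le MA_s$ implies $mI_{\mathscr H}\le X_s\le MI_{\mathscr H}$ uniformly in $s\in T$, so the spectrum of every $X_s$ lies in $[m,M]$. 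Using that $X_s^{p/2}$ and $f(X_s)$ commute in the functional calculus of the single self-adjoint $X_s$, one computes
\[
C_s^*C_s=A_s\natural_p B_s,\qquad C_s^*X_sC_s=A_s\natural_{p+1} B_s,\qquad C_s^*f(X_s)C_s=S_p^f(A_s|B_s).
\]

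With these identifications, applying Lemma \ref{reverses2} (in its field-valued version; see the remark below) to the integrand on the left-hand side of \eqref{ShEnInre} gives
\begin{align*}
f\Bigl[\int_T(A_s\natural_{p+1}B_s)\,d\mu(s)&+t_0\Bigl(I_{\mathscr H}-\int_T A_s\natural_p B_s\,d\mu(s)\Bigr)\Bigr]\\
&\le \int_T C_s^*f(X_s)C_s\,d\mu(s)+f(t_0)\Bigl(I_{\mathscr H}-\int_T A_s\natural_p B_s\,d\mu(s)\Bigr)+\zeta_f\\
&=\widetilde{S}_p^f(\mathbf{A}|\mathbf{B})+f(t_0)\Bigl(I_{\mathscr H}-\int_T A_s\natural_p B_s\,d\mu(s)\Bigr)+\zeta_f,
\end{align*}
and rearranging yields \eqref{ShEnInre}.

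The one subtle point is that Lemma \ref{reverses2} as stated uses a single element $X\in\mathscr A$, whereas here we need an $X_s$ depending on $s$. This is already an implicit feature of the proof of Theorem \ref{tShEnIn000}, and the extension is routine: one can either repeat the block-diagonal construction of the lemma's proof with the positive unital linear map $\Phi({\rm diag}(X,Y))=\int_T C_s^*XC_s\,d\mu(s)+D^*YD$ applied to the field $(X_s)$ rather than a fixed $X$, or invoke the Mond--Pe\v{c}ari\'c inequality \eqref{mond1o1} directly on the single operator whose spectrum is contained in $[m,M]$. In either case, the scalar constant $\zeta_f$ is preserved, which is exactly what is needed to produce the additive reverse bound \eqref{ShEnInre}.
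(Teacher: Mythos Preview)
Your proposal is correct and follows exactly the approach the paper indicates: the paper's own proof is just the sentence ``Using Lemma \ref{reverses2}, by the same argument in the proof of Theorem \ref{tShEnIn} we give the next result,'' and you have carried out precisely that substitution. Your remark about the field-valued $X_s$ versus a fixed $X$ is a fair observation---the paper's Lemmas \ref{2.3}, \ref{reverses1}, \ref{reverses2} are stated for a single $X$ but applied in Theorems \ref{tShEnIn000} and \ref{tShEnIn} with $X_s=A_s^{-1/2}B_sA_s^{-1/2}$---and your suggested fix via the block-diagonal unital map is the natural one.
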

In the discrete case, we get a reverse of  inequality \eqref{ShEnIn}.
\begin{corollary}\label{tShEnIn345}
Let  $0<m A_j \leq B_j \leq M A_j\,\,(1\leq j\leq n)$  for some positive real numbers $m, M$  such that $m<1<M$,  $\sum_{j=1}^nA_j=\sum_{j=1}^nB_j=I_{\mathscr H}$ and $f: (0,\infty) \to [0,\infty)$ be  operator concave, $p\in[0,1]$ and $t_0\in[m,M]$. Then
\begin{align}\label{ShEnInre}
f&\left[\sum_{j=1}^n(A_j\natural_{p+1}B_j)+t_0\left(I_{\mathscr H}-\sum_{j=1}^nA_j\natural_pB_j\right)\right]-f(t_0)\left(I_{\mathscr H}-\sum_{j=1}^nA_j\natural_pB_j\right)\nonumber\\
&\le S_p^f(\mathbf{A}|\mathbf{B})+\zeta_f,
\end{align}
where $\zeta_f$ is given by \eqref{beta}.
\end{corollary}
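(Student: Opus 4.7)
\emph{Plan.} Since Corollary \ref{tShEnIn345} is stated as the discrete-index case of Theorem \ref{tShEnIn34545}, the strategy is pure specialization. I would take the locally compact Hausdorff space to be $T=\{1,2,\dots,n\}$ equipped with the counting measure $\mu$. Under this choice every Bochner integral collapses to a finite sum: $\int_T A_s\,d\mu(s)=\sum_{j=1}^{n}A_j$, $\int_T(A_s\natural_{p}B_s)\,d\mu(s)=\sum_{j=1}^{n}A_j\natural_{p}B_j$, and likewise with $p+1$ in place of $p$. By the definition \eqref{ShEn}, the generalized relative operator entropy becomes $\widetilde{S}_p^f(\mathbf{A}|\mathbf{B})=S_p^f(\mathbf{A}|\mathbf{B})$.

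Next I would verify that the hypotheses transfer verbatim. The sandwich $0<mA_s\le B_s\le MA_s$ becomes $0<mA_j\le B_j\le MA_j$ for each $1\le j\le n$; the normalizations $\int_T A_s\,d\mu(s)=\int_T B_s\,d\mu(s)=I_{\mathscr H}$ become $\sum_{j=1}^{n}A_j=\sum_{j=1}^{n}B_j=I_{\mathscr H}$; and the remaining data $m<1<M$, $p\in[0,1]$, $t_0\in[m,M]$, operator concavity of $f$, and the constant $\zeta_f$ from \eqref{beta} are unchanged. The continuous-field regularity conditions (norm continuity of $s\mapsto A_s$ and integrability of $s\mapsto\|A_s\|$) are automatic for finite families. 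Plugging these identifications into the conclusion \eqref{ShEnInre} of Theorem \ref{tShEnIn34545} yields, without any further manipulation, precisely the inequality claimed in the corollary.

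There is no genuine obstacle; the only mild bookkeeping item is checking that $\widetilde{S}_p^f(\mathbf{A}|\mathbf{B})$ really does reduce to $S_p^f(\mathbf{A}|\mathbf{B})$ under the counting measure, which holds directly from definitions \eqref{ShEn321} and \eqref{ShEn}. If one preferred a self-contained proof, one could instead re-run the argument of Theorem \ref{tShEnIn34545} in discrete clothing: apply the finite-sum analogue of Lemma \ref{reverses2} with the choice $C_j=(A_j^{-1/2}B_jA_j^{-1/2})^{p/2}A_j^{1/2}$ (so that $\sum_j C_j^*C_j=\sum_j A_j\natural_p B_j\le I_{\mathscr H}$ by \eqref{kian12}), invoke operator monotonicity of $f$ together with the reverse Mond--Pe\v{c}ari\'c inequality \eqref{mond1o1}, and chain the resulting identities exactly as in the proof of Theorem \ref{tShEnIn34545}. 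This alternative route gives the same conclusion but merely repeats work already carried out.
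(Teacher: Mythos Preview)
Your proposal is correct and matches the paper's own approach: the paper states this corollary immediately after Theorem \ref{tShEnIn34545} with the sole remark that it is the discrete case, so the intended proof is precisely the specialization $T=\{1,\dots,n\}$ with counting measure that you describe. Your optional self-contained rerun via Lemma \ref{reverses2} is also faithful to how Theorem \ref{tShEnIn34545} itself is proved, so there is nothing to add.
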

Using Theorem \ref{tShEnIn345} for the operator monotone functions $f(t)=-t\log t$ and $g(t)=\log t$, respectively, we get the following example.
\begin{example}\label{coro}
Let  $0<m A_j \leq B_j \leq M A_j\,\,(1\leq j\leq n)$  for some positive real numbers $m, M$  such that $m<1<M$, $\sum_{j=1}^nA_j=\sum_{j=1}^nB_j=I_{\mathscr H}$ and $t_0\in[m,M]$. Then
{\footnotesize\begin{align*}
&\left[\sum_{j=1}^n(A_j\natural_{p+1}B_j)
+t_0\left(I_{\mathscr H}-\sum_{j=1}^nA_j\natural_pB_j\right)\right]\log\left[\sum_{j=1}^n(A_j\natural_{p+1}B_j)
+t_0\left(I_{\mathscr H}-\sum_{j=1}^nA_j\natural_pB_j\right)\right]\\&\qquad-t_0\log(t_0)\left(I_{\mathscr H}-\sum_{j=1}^nA_j\natural_pB_j\right)\\
&\ge S_1(\mathbf{A}|\mathbf{B})+L(1/m,1/M)^{-1}-I(m,M)
\end{align*}}
and
{\footnotesize\begin{align*}
\log&\left[\sum_{j=1}^n(A_j\natural_{p+1}B_j)+t_0\left(I_{\mathscr H}-\sum_{j=1}^nA_j\natural_pB_j\right)\right]-\log(t_0)\left(I_{\mathscr H}-\sum_{j=1}^nA_j\natural_pB_j\right)\\
&\le S(\mathbf{A}|\mathbf{B})+\log\left[\frac{1}{e}\left(\frac{M^m}{m^M}\right)^{\frac{1}{M-m}}L(m,M)\right],
\end{align*}}
where $L(a,b)=\begin{cases}\frac{b-a}{\log b-\log a} & ;a\neq b\\ a & ; a=b\end{cases}$ and $I(a,b)=\begin{cases}\frac{1}{e}\left(\frac{b^b}{a^a}\right)^{\frac{1}{b-a}} & ;a \neq b\\ a & ;a=b\end{cases}$  are the Logarithmic mean and the Identric mean of positive real numbers $a$ and $b$, respectively.
\end{example}
\bibliographystyle{amsplain}

\end{document}